\newtheorem{theorem}{Theorem}[section]
\newtheorem{corollary}[theorem]{Corollary}
\newtheorem{lemma}[theorem]{Lemma}
\newtheorem{proposition}[theorem]{Proposition}
\theoremstyle{definition}
\newtheorem{definition}[theorem]{Definition}
\theoremstyle{definition}
\newtheorem{remark}[theorem]{Remark}
\newtheorem{example}[theorem]{Example}
\newtheorem{notation}[theorem]{Notation}
\newtheorem{claim}[theorem]{Claim}
\newcommand{\Fun}{\mathrm{Fun}}
\newcommand{\C}{\mathscr{C}}
\newcommand{\D}{\mathscr{D}}
\newcommand{\X}{\mathscr{X}}
\newcommand{\Y}{\mathscr{Y}}
\newcommand{\T}{\mathscr{T}}
\newcommand{\Pre}{\mathcal{P}}
\newcommand{\Sh}{\mathrm{Shv}}
\newcommand{\Sp}{\mathcal{S}}
\newcommand{\id}{\mathrm{id}}
\newcommand{\op}{\mathrm{op}}
\newcommand{\map}{\mathrm{map}}
\newcommand{\Set}{\mathrm{Set}}
\newcommand{\colim}{\mathrm{colim}}
\newcommand{\G}{\mathcal{G}}
\newcommand{\lex}{\mathrm{lex}}
\newcommand{\fl}{\mathrm{fl}}
\renewcommand{\1}{\textbf{1}}
\renewcommand{\P}{\mathbb{P}}
\newcommand{\R}{\mathcal{R}}
\begin{document}

\title{Flat functors in higher topos theory} 
\author{George Raptis and Daniel Sch\"appi}
\maketitle

\begin{abstract}
For a small $n$-category $\C$ and an $n$-topos $\X$, we study necessary and sufficient conditions for a functor $f \colon \C \to \X$ to determine a geometric morphism from $\X$ to the $n$-topos $\Pre(\C)_n$ of presheaves on $\C$ for any $n \geq 1$. These results generalize and unify results of Lurie for $n=\infty$ and classical characterizations of flat functors (Diaconescu's theorem) for $n=1$. Interestingly, for $n=\infty$, our analogue of Diaconescu's theorem requires hypercompleteness. As an application, we show that the $\infty$-topos associated to an $n$-site behaves as an $n$-localic $\infty$-topos with respect to hypercomplete $\infty$-topoi. 
\end{abstract}

\section{Introduction}

A geometric morphism $f_* \colon \X \to \T$ between (ordinary Grothendieck) topoi is a right adjoint whose left adjoint is left exact, that is, the left adjoint $f^* \colon \T \to \X$ preserves finite limits. Diaconescu's theorem gives a characterization of geometric morphisms whose target $\T$ is a presheaf category. This characterization is in terms of internal categories and works over an arbitrary base topos (see \cite[Theorem~4.3]{Di}).
 
Let $\C$ be a small (ordinary) category. Then the presheaf category $\Pre(\C)$ is the free cocompletion of $\C$: for any cocomplete category $\X$, the restriction along the Yoneda embedding $j \colon \C \rightarrow \Pre(\C)$ gives an equivalence between left adjoint functors $\Pre(\C) \rightarrow \X$ and arbitrary functors $\C \rightarrow \X$. If $\X$ is in addition a topos, we can further ask for a characterization of those functors $\C \rightarrow \X$ whose induced left adjoint $\Pre(\C) \rightarrow \X$ is left exact. Such a characterization has been given by Mac Lane--Moerdijk in \cite[VII.8-9]{MM}, using the notion of a \emph{filtering} functor \cite[Definition VII.8.1]{MM}. Specifically, it is shown that a functor $f \colon \C \rightarrow \X$ is filtering if and only if the induced left adjoint preserves finite limits \cite[Theorem~VII.9.1]{MM}, yielding a slightly different form of Diaconescu's theorem.

\smallskip
 
Diaconescu's theorem can be used to identify the geometric morphisms with target a presheaf category explicitly in certain cases. For example, if $G$ is a group, considered as a category with one object, then the category of geometric morphisms $\X \rightarrow \Pre(G)$ corresponds to the category of $G$-torsors in $\X$ (see \cite[Example~B.3.2.4(b)]{Jo} for a derivation in terms of Diaconescu's theorem and \cite[Theorem~VIII.2.7]{MM} for a proof using the notion of filtering functor). Another interesting example is given in \cite[VIII.8]{MM}, where it is shown that the presheaf topos $\Pre(\Delta)$ of simplicial sets classifies linear orders \cite[Theorem~VIII.8.5]{MM}. Moreover, Diaconescu's theorem is used in \cite{Mo}  to study what kind of objects are classified by the classifying space $B\C$ of a small category $\C$. This is done by relating continuous maps $X \rightarrow B\C$ (up to homotopy) to geometric morphisms $\Sh(X) \rightarrow \Pre(\C)$ (up to concordance), and then appealing to Diaconescu's theorem.

\medskip
 
Classical topos theory has been successfully generalized to higher categories building on the influential visionary outlook proposed by Grothendieck \cite{Gr}. This program has been carried out in the context of quasi-categories ($\infty$-categories) by Lurie in his landmark work \cite{HTT}. In \cite{HTT}, the notion of (Grothendieck) $n$-topos is defined (see \cite[Definitions~6.1.0.4 and 6.4.1.1]{HTT}) as an accessible left exact localization of the $n$-category of $(n\text{-}1)$-truncated objects in a presheaf category. It is then shown that there exists an axiomatic characterization of $n$-topoi akin to Giraud's theorem in the case $n=1$ (see \cite[Theorems~6.1.0.6 and 6.4.1.5]{HTT}). 
Following the standard convention, we refer here to $(n,1)$-topoi and more generally $(n,1)$-categories simply as $n$-topoi and $n$-categories, respectively. 
 
 A geometric morphism between $n$-topoi is again defined to be a right adjoint whose left adjoint is left exact, that is, it preserves finite limits. Let $\C$ be a small $n$-category. We write $\Pre(\C)_n$ for the $n$-category of presheaves with values in $(n\text{-}1)$-truncated spaces. As for $n=1$, this category is the free cocompletion of $\C$ (see \cite[Theorem~5.1.5.6]{HTT} for the case $n=\infty$; the case $n<\infty$ can be deduced from this, see Proposition~\ref{prop:free_cocomplete_n_category}). Given a cocomplete $n$-category $\Y$, the universal property of $\Pre(\C)_n$ says that any $f \colon \C \rightarrow \Y$ admits a unique extension $\Pre(\C)_n \rightarrow \Y$ which is a left adjoint. This extension is given by the left Kan extension of $f$ along the Yoneda embedding $j_n \colon \C \rightarrow \Pre(\C)_n$ and we denote it by $L(f)_n \colon \Pre(\C)_n \rightarrow \Y$ (or simply $L(f)$ in the case $n=\infty$). Thus a characterization of geometric morphisms from an $n$-topos $\Y$ to $\Pre(\C)_n$ amounts to a characterization of those functors $f \colon \C \rightarrow \Y$ for which $L(f)_n$ preserves finite limits. 

\smallskip

If $n=\infty$ and $\C$ has finite limits, then \cite[Proposition~6.1.5.2]{HTT} shows that $L(f)$ preserves finite limits if and only if $f \colon \C \rightarrow \Y$ preserves finite limits. In fact, the proof shows more (see \cite[Proposition~20.4.3.1]{SAG}): even if $\C$ fails to have all finite limits, the functor $L(f)$ preserves finite limits if and only if it preserves finite limits of representable presheaves. We note that \cite[Proposition~6.1.5.2]{HTT}  is an important ingredient in the proof of the $(n=\infty)$-version of Giraud's theorem \cite[Theorem~6.1.0.6]{HTT}.
 
\smallskip

Now we come to the statements of our main results. First, we show that the above characterization of left exactness of $L(f)$ remains valid for all $n \geq 1$. Note that this is not yet a ``practically intrinsic'' condition on $f$ since it requires understanding of the extension $L(f)_n$.
In Section \ref{section:covering_flat_functors}, we extend the notion of filtering functor from \cite[Definition~VII.8.1]{MM} to the context of $\infty$-categories. Following the terminology introduced in \cite{nLab}, we call these functors \emph{covering-flat}. Then, assuming that $\Y$ is hypercomplete, which is always satisfied if $n < \infty$, our second main result characterizes the functors $f \colon \C \to \Y$ that yield geometric morphisms as the covering-flat functors. The following statement summarizes our results. 

\begin{theorem}\label{main}
Let $\C$ be a small $n$-category and let $\Y$ be an $n$-topos, where $1 \leq n \leq \infty$. For a functor $f \colon \C \to \Y$, the following are equivalent:
\begin{enumerate}
\item[(1)] $L(f)_n \colon \Pre(\C)_n \to \Y$ is left exact; 
\item[(2)] $L(f)_n\colon \Pre(\C)_n \to \Y$ preserves finite limits of representable presheaves, that is, $L(f)_n$ preserves limits of diagrams of the form 
$$(K \to \C \to \Pre(\C)_n)$$ 
where $K$ is a finite simplicial set.
\end{enumerate}
If $n=\infty$, then (1)--(2) are equivalent to:
\begin{enumerate}
\item[(2)$'$] $L(f)=L(f)_{\infty}$ preserves the terminal object and pullbacks of representables, that is, $L(f)$ preserves limits of representables (as in (2)) indexed by the following shapes:
$$
\xymatrix{
\varnothing,  & & (\textsc{Pullbacks}) & \bullet \ar[d] \\
(\textsc{Terminal objects}) & & \bullet \ar[r] & \bullet. 
}
$$
\end{enumerate}
Assuming that $\Y$ is hypercomplete (e.g. when $n<\infty$), then (1)--(2) are also equivalent to:
\begin{enumerate}
\item[(3)] $f\colon \C \to \Y$ is covering-flat (see Definition \ref{covering_condition}). 
\end{enumerate}
\end{theorem}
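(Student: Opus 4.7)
The equivalence of (1) and (2) extends \cite[Proposition~6.1.5.2]{HTT} and \cite[Proposition~20.4.3.1]{SAG} to all $n \geq 1$. The implication (1) $\Rightarrow$ (2) is immediate. For (2) $\Rightarrow$ (1), the plan is to first reduce the case $n < \infty$ to the case $n = \infty$: using Proposition~\ref{prop:free_cocomplete_n_category}, I would identify $\Pre(\C)_n$ with the category of $(n{-}1)$-truncated presheaves inside $\Pre(\C)$ and compare left exactness of $L(f)_n$ with that of a corresponding $L(f)_{\infty}$ landing in an $\infty$-categorical enlargement of $\Y$. This reduces the core problem to the $\infty$-case.

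For $n = \infty$, every presheaf $X \in \Pre(\C)$ is canonically the colimit, over its category of elements, of the representables mapping to $X$, and $L(f)$ preserves all small colimits by construction. The heart of the argument is to express an arbitrary finite limit $\lim p$ in $\Pre(\C)$ as a suitable colimit of finite limits of representable diagrams in $\C$, and then exchange limit and colimit using the universality of colimits in the $\infty$-topos $\Y$. The technical point, already present in \cite[Proposition~20.4.3.1]{SAG}, is that when $\C$ lacks finite limits this rewriting cannot use a single representable limit cone but must glue together cofinally many of them; hypothesis (2) feeds in precisely at this step.

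For the equivalence of (2) and (2)$'$ when $n = \infty$, I would argue by induction on a cell decomposition of the finite simplicial set $K$. Since any finite limit in an $\infty$-category is built from the terminal object by iterated pullback, and $L(f)$ preserves colimits, once pullbacks and terminals of representables are preserved, preservation of pullbacks of diagrams whose vertices are finite colimits of representables follows by the pullback-stability of colimits in the $\infty$-topos $\Y$. Iterating this reduction yields preservation of all finite limits of representables.

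Finally, the equivalence (1) $\Leftrightarrow$ (3) is the Diaconescu-type step. The direction (1) $\Rightarrow$ (3) should be a direct verification that left exactness forces the conditions appearing in Definition~\ref{covering_condition}. The converse (3) $\Rightarrow$ (1) is where hypercompleteness enters, and I expect it to be the main obstacle. The covering-flatness condition yields local liftings of cones on finite diagrams $K \to \C$ to cones coming from $\C$ itself, which I would translate into the statement that the comparison map $L(f)(\lim p) \to \lim(L(f) \circ p)$ is $\infty$-connective rather than an equivalence. To promote this to an actual equivalence in $\Y$ one needs to invert $\infty$-connective maps, which is exactly the content of hypercompleteness; this accounts for why the hypothesis cannot be dropped when $n = \infty$ and is automatic when $n < \infty$ (since there $\infty$-connective maps between $(n{-}1)$-truncated objects are equivalences).
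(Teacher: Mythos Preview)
Your overall architecture is close to the paper's, but two of the steps you describe have genuine gaps.

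\medskip

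\textbf{The step $(2)' \Rightarrow (1)$ for $n=\infty$.} Your plan is to reduce arbitrary finite limits to iterated pullbacks and then use universality of colimits to pass from representable vertices to general ones. The difficulty is that pullbacks are not symmetric in this respect: writing the two \emph{side} objects $X,Y$ of a cospan $X \to Z \leftarrow Y$ as colimits of representables and invoking universality does reduce to representable sides (this is the paper's Claim~\ref{Claim1}), but the same trick fails for the \emph{base} $Z$, since pullbacks do not commute with colimits in the base variable. Thus ``iterating this reduction'' does not close the argument. The paper handles the base by a different mechanism: it covers $Z$ by a coproduct of representables, forms the \v{C}ech nerve, and uses that groupoid objects in an $\infty$-topos are effective to conclude that $L(f)$ preserves the relevant pullback squares over $Z$. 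This is the substantive new idea, and your sketch does not supply a substitute for it.

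\medskip

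\textbf{The reduction $n<\infty$ to $n=\infty$.} You propose to compare $L(f)_n$ with $L(i_n^{\X} f)$ for an $\infty$-topos $\X$ with $\Y\simeq\X_n$, invoking Proposition~\ref{prop:free_cocomplete_n_category}. The problem is that condition~(2) for $L(f)_n$ concerns limits computed in $\Pre(\C)_n$, whereas condition~(2) for $L(i_n^{\X} f)$ concerns limits in $\Pre(\C)$; these differ by an $(n{-}1)$-truncation, so knowing one does not directly give the other. The paper acknowledges this obstruction explicitly (Remark~\ref{directproof_n-topoi}): a direct transport of the $n=\infty$ proof would require showing that the relevant groupoid object is $n$-efficient, which is not straightforward. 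Instead, the paper routes the implication through (3): it proves $(2)\Rightarrow(3)$ at level $n$ (Claim~\ref{Claim3}), observes that covering-flatness is insensitive to the inclusion $\X_n\hookrightarrow\X$ (Claim~\ref{Claim2}), applies Theorem~\ref{Thm2} at level $\infty$, and then descends via Proposition~\ref{prop_comparison}. So the equivalence with (3) is not a separate add-on but the actual engine for $(2)\Rightarrow(1)$ when $n<\infty$.

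\medskip

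\textbf{The step $(3)\Rightarrow(1)$.} Your intuition that covering-flatness forces the comparison map $L(f)(\lim jg)\to \lim fg$ to be $\infty$-connective, and that hypercompleteness then upgrades this to an equivalence, is exactly right and matches the paper's strategy. What you are missing is the mechanism that produces $\infty$-connectivity: the paper constructs, for each such limit, an explicit semi-simplicial hypercovering by coproducts of representables (Subsection~\ref{criteria_flatness}), decomposes its matching objects into limits of representable diagrams (Lemma~\ref{decomposition_points}), and then uses covering-flatness to show that $L(f)$ carries this to a semi-simplicial hypercovering of $\lim fg$. This is where the real work lies; your sketch would need to supply something equivalent.
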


This is proved in Theorems~\ref{Thm1}  and \ref{Thm2} (for $n=\infty$) and Theorem \ref{Thm3} (for $n < \infty$). It is worth noting that the equivalence $(1) \Leftrightarrow (2)$ in the case $n < \infty$ relies on the characterization $(3)$ in the case $n=\infty$. In other words, we reduce a question in $n$-topos theory for general $n$ to the case $n=\infty$. This reduction turned out to be more convenient than a direct proof for $n < \infty$ along the lines of Theorem~\ref{Thm1} (see Remark~\ref{directproof_n-topoi} for more details).

\smallskip

 We call a functor $f \colon \C \rightarrow \Y$ \emph{flat} if it satisfies the equivalent conditions $(1)$ and $(2)$ of the above theorem. If the target $\Y$ is not hypercomplete, then the notions of flat and covering-flat functors will differ in general, i.e., there are covering-flat functors which are not flat (Remark~\ref{hypercompleteness}). The equivalence with $(2)'$ is only valid in the case $n=\infty$: already for $n=1$ there are examples where this equivalence does not hold (Example~\ref{example_groupoid}).
 
\medskip
 
 As an application of Theorem~\ref{main}, we show that the $\infty$-category of sheaves $\Sh(\C, \tau)$ on an $n$-category $\C$ with a Grothendieck topology $\tau$ behaves as an $n$-localic $\infty$-topos relative to hypercomplete $\infty$-topoi (Subsection \ref{application}).  Given two $n$-topoi $\X$ and $\Y$, we write $\mathrm{Fun}_{\ast}(\Y,\X)$ for the $\infty$-category of geometric morphisms. For $m \geq n$, an $m$-topos $\X$ is \emph{$n$-localic} if for any $m$-topos $\Y$, the restriction functor
 \[
 \mathrm{Fun}_{\ast}(\Y,\X) \rightarrow \mathrm{Fun}_{\ast}(\Y_n,\X_n)
 \]
 is an equivalence, where we write $\Y_n$ (respectively $\X_n$) for the $n$-topos of $(n\text{-}1)$-truncated objects in $\Y$ (respectively $\X$). It is known that $\Pre(\C)$ is $n$-localic in the case where the $n$-category $\C$ has finite limits \cite[6.4.5]{HTT}, but this fails for a general $n$-category $\C$. However, we show in Corollary~\ref{cor:flat_functors4} that the restriction functor
 \[
 \mathrm{Fun}_{\ast}\bigl(\Y,\Sh(\C,\tau)_m \bigr) \rightarrow \mathrm{Fun}_{\ast} \bigl( \Y_n, \Sh(\C,\tau)_n \bigr)
 \]
 is an equivalence if $\Y$ is a \emph{hypercomplete} $m$-topos. In particular, the $\infty$-topos $\Sh(\C,\tau)$ can be regarded as $n$-localic with respect to hypercomplete $\infty$-topoi.

\bigskip

\noindent \textbf{Acknowledgements.} We thank Denis-Charles Cisinski for his useful comments and Sebastian Wolf for interesting discussions. We also thank Peter Haine for an illuminating discussion concerning $n$-localic $\infty$-topoi. The authors were partially supported by \emph{SFB 1085 -- Higher Invariants} (University of Regensburg), funded by the DFG.

\section{Flat functors} \label{sec:infinity-topoi}

Let $\C$ be a small $\infty$-category. We denote by $\Pre(\C)$ the $\infty$-topos of presheaves on $\C$ and we denote the Yoneda embedding by $j \colon \C \to \Pre(\C)$ (see \cite[5.1]{HTT}, \cite[5.8]{Ci}).

\smallskip

Let $\X$ be a cocomplete $\infty$-category and let $f \colon \C \to \X$ be a functor. Then there is an essentially unique colimit-preserving functor \cite[5.1.5]{HTT}, \cite[6.3]{Ci}
$$L(f) \colon \Pre(\C) \to \X$$
which extends $f$ along the Yoneda embedding $j \colon \C \to \Pre(\C)$; in addition, $L(f)$ admits a right adjoint \cite[5.2.6.5]{HTT}, \cite[6.3.4]{Ci}, \cite[4.1.4]{NRS}. 

In the case where $\X$ is an $\infty$-topos, our goal is to specify necessary and sufficient conditions on $f \colon \C \to \X$ such that the left adjoint $L(f) \colon \Pre(\C) \to \X$ is left exact and therefore determines a geometric morphism between $\infty$-topoi. The following theorem gives a characterization of such left exact left adjoint functors $L(f) \colon \Pre(\C) \to \X$ in terms of finite limits of representable presheaves. This characterization was shown in \cite[6.1.5.2]{HTT} (when $\C$ admits finite limits) and in \cite[20.4.3.1]{SAG} (for general $\C$). This result plays an important role in the proof of the $\infty$-categorical version of Giraud's theorem (see \cite[6.1.0.6]{HTT}).  

\begin{theorem} \label{Thm1}
Let $\C$ be a small $\infty$-category and let $\X$ be an $\infty$-topos. For a functor $f \colon \C \to \X$, the following are equivalent:
\begin{enumerate}
\item $L(f) \colon \Pre(\C) \to \X$ is left exact; 
\item $L(f)$ preserves finite limits of representable presheaves, that is, $L(f)$ preserves limits of diagrams of the form 
$$(K \to \C \to \Pre(\C))$$ 
where $K$ is a finite simplicial set.
\item[(2)$'$] $L(f)$ preserves the terminal object and pullbacks of representable presheaves, that is, $L(f)$ preserves limits as in (2) indexed by the following shapes:
$$
\xymatrix{
\varnothing,  & & (\textsc{Pullbacks}) & \bullet \ar[d] \\
(\textsc{Terminal objects}) & & \bullet \ar[r] & \bullet. 
}
$$
\end{enumerate}
\end{theorem}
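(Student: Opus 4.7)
The implications $(1) \Rightarrow (2) \Rightarrow (2)'$ are immediate: left exactness subsumes preservation of any finite limit, and pullbacks together with the terminal object are the standard generating shapes for finite limits. The substantive direction is $(2)' \Rightarrow (1)$, and my plan is to follow the template of the arguments in \cite[6.1.5.2]{HTT} and \cite[20.4.3.1]{SAG}: reduce the preservation of an arbitrary pullback in $\Pre(\C)$ to the assumed preservation of pullbacks of representables by exploiting that $\Pre(\C)$ and $\X$ are both $\infty$-topoi (so colimits are universal in each), together with cocontinuity of $L(f)$ and the density of representables in $\Pre(\C)$.

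Preservation of the terminal object is literally part of $(2)'$. Since terminal objects and pullbacks jointly generate finite limits, it suffices to show that $L(f)$ preserves an arbitrary pullback $A \times_C B$ in $\Pre(\C)$. The plan is to choose, via the categories of elements (i.e.\ the Yoneda/density equivalence), presentations $C \simeq \colim_I j(c_i)$, $A \times_C j(c_i) \simeq \colim_{J_i} j(d_{i,j})$, and $B \times_C j(c_i) \simeq \colim_{K_i} j(e_{i,k})$ as colimits of representables. Two successive applications of the universality of colimits in $\Pre(\C)$ would then give
$$A \times_C B \;\simeq\; \colim_I \, \colim_{J_i \times K_i} \; j(d_{i,j}) \times_{j(c_i)} j(e_{i,k}).$$
Now one applies $L(f)$: using that $L(f)$ is cocontinuous, that $(2)'$ applies term-by-term to the representable pullbacks $j(d_{i,j}) \times_{j(c_i)} j(e_{i,k})$, and that colimits in $\X$ are again universal, one should be able to undo the same decomposition on the $\X$-side to obtain the desired equivalence $L(f)(A \times_C B) \simeq L(f)(A) \times_{L(f)(C)} L(f)(B)$.

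The main obstacle will be the homotopy-coherent bookkeeping needed to make this \emph{substitute, apply $L(f)$, resynthesize} procedure rigorous, since individual object-level rewrites must be upgraded to equivalences of diagrams. Morally, this is where one invokes the descent formulation of the Giraud axioms, or equivalently rephrases the argument via cartesian fibrations over the appropriate categories of elements, so that the interchange of pullbacks with colimits becomes manifestly functorial rather than being pieced together by hand. This is precisely what is carried out in \cite[20.4.3.1]{SAG}, so the proof ultimately amounts to an appeal to that result.
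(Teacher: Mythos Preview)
Your appeal to \cite[20.4.3.1]{SAG} is a legitimate proof-by-citation, but the sketch you give is \emph{not} the argument carried out there (or in \cite[6.1.5.2]{HTT}, or in the present paper), and as written it has a gap. You propose to decompose the \emph{base} $C \simeq \colim_I j(c_i)$ first and then $A_i := A \times_C j(c_i)$ and $B_i := B \times_C j(c_i)$. The $\Pre(\C)$-side decomposition is fine, but the reassembly in $\X$ requires identifying $L(f)(A) \times_{L(f)(C)} f(c_i)$ with $L(f)(A_i)$ (equivalently, that the family $(L(f)(A_i) \to f(c_i))_i$ is cartesian over $I$), and this amounts to preservation of the pullbacks $A_{i'} \times_{j(c_{i'})} j(c_i)$ with \emph{non-representable} leg $A_{i'}$. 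That is not part of $(2)'$, and invoking ``descent'' does not manufacture it.

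The correct order of reductions---used in \cite[6.1.5.2]{HTT}, \cite[20.4.3.1]{SAG}, and in the paper---is the reverse: fix the base $Z$ and decompose the \emph{legs}. Writing $X \simeq \colim_J j(c_j)$, universality of colimits in $\Pre(\C)$ and in $\X$ (together with cocontinuity of $L(f)$) cleanly reduces preservation of $X \times_Z Y$ to preservation of $j(c_j) \times_Z Y$, and then similarly for $Y$; this yields the statement that $L(f)$ is left exact at $Z$ as soon as it preserves pullbacks of representables over $Z$ (Claim~\ref{Claim1} in the paper). The substantive step is then to pass from representable $Z$ (where $(2)'$ applies) to arbitrary $Z$, and this cannot be done by naively writing $Z$ as a colimit. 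The paper argues that the class of such $Z$ is closed under coproducts (using disjointness), and then for general $Z$ takes the \v{C}ech nerve $U_\bullet$ of the canonical effective epimorphism $\bigsqcup j(c) \to Z$: since $L(f)$ is left exact at $U_0$, the image $L(f)(U_\bullet)$ is again a groupoid object; effectivity of groupoids in $\X$ then forces $L(f)$ to preserve the pullback $U_0 \times_Z U_0$, and a pasting argument finishes. The proof in \cite{SAG} differs only in that it handles this last step via the free-groupoid machinery of \cite[6.1.4]{HTT}.
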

\begin{proof} (1) $\Rightarrow$ (2) $\Rightarrow$ (2)$'$ are obvious. (2)$'$ $\Rightarrow$ (1) was shown in \cite[20.4.3.1, 20.4.3.5]{SAG} (following the proof of \cite[6.1.5.2]{HTT}, which remains valid even though $\C$ may not admit finite limits). We give a slightly different proof which circumvents the theory of free groupoids developed in \cite[6.1.4]{HTT}. 

For the implication (2)$'$ $\Rightarrow$ (1), it suffices to prove that the functor $L(f)$ preserves pullbacks in $\Pre(\C)$. Given an object $Z$ in $\Pre(\C)$, we say that the functor $L(f)$ is \emph{left exact at $Z$} if it preserves pullback squares whose lower right corner is $Z$. The first step in the proof of (2)$'$ $\Rightarrow$ (1) is the following reduction:

\begin{claim} \label{Claim1}
$L(f)$ is left exact at $Z \in \Pre(\C)$ if and only if $L(f)$ preserves pullbacks in $\Pre(\C)$ of the form: 
$$
\xymatrix{
W \ar[r] \ar[d] & j(c) \ar[d] \\
j(c') \ar[r] & Z
}
$$
where $c, c' \in \C$.
\end{claim}
\noindent \emph{Proof of Claim \ref{Claim1}.} The proof is contained in the proof of \cite[6.1.5.2]{HTT} and uses (twice) the fact that $\Pre(\C)$ and $\X$ have universal colimits and that $L(f)$ preserves colimits. Indeed for a general pullback diagram in $\Pre(\C)$
$$
\xymatrix{
W \ar[r] \ar[d] & X \ar[d] \\
Y \ar[r] & Z
}
$$
we may write $X$ (and then $Y$) as a small colimit of representable presheaves, and then the aforementioned properties of $\Pre(\C)$, $\X$, and $L(f)$, allow us to reduce the assertion that $L(f)$ preserves this pullback square to the assertion that $L(f)$ preserves the associated pullback squares in which 
$X$ (and then $Y$) are replaced by representable presheaves. \qed

\smallskip

\noindent Thus by Claim \ref{Claim1}, it suffices to prove that $L(f)$ preserves pullbacks in $\Pre(\C)$ of the form:
$$
\xymatrix{
W \ar[r] \ar[d] & j(c) \ar[d] \\
j(c') \ar[r] & Z.
}
$$
By assumption, this holds when $Z$ is representable. Following the proof of \cite[6.1.5.2]{HTT}, we claim that the class of objects $Z \in \Pre(\C)$ for which $L(f)$ is left exact at $Z$ is closed under small coproducts.  This is proved in the same way as in \cite[6.1.5.2]{HTT} and it is a consequence of the fact that coproducts are disjoint (see \cite[6.1.5.1]{HTT}). Thus, $L(f)$ is left exact at small coproducts of representable presheaves.

For an arbitrary presheaf $Z$ in $\Pre(\C)$, let
$$q  \colon \bigsqcup_{\alpha \colon j(c) \rightarrow Z} j(c) \longrightarrow Z$$ be the canonical morphism from the coproduct which is indexed by a set of morphisms $(\alpha \colon j(c) \rightarrow Z)$ in $\Pre(\C)$, one from each homotopy class of morphisms in $\map(j(c), Z) \simeq Z(c)$ (see \cite[5.8]{Ci}, \cite[5.5.2]{HTT}), for all objects $c$ of $\C$. 

Let $U_{\bullet}$ be the \v{C}ech nerve associated to the morphism $q$. Since $U_0$ is a coproduct of representable presheaves, $L(f)$ is left exact at $U_0$. It follows from this that $L(f)(U_{\bullet})$ is again a groupoid object. The morphism $q \colon U_0 \rightarrow Z$ is an effective epimorphism, since it is $\pi_0$-surjective objectwise by construction. Therefore, the augmentation $q$ exhibits $Z$ as a colimit of $U_\bullet$ in the $\infty$-topos $\Pre(\C)$. The functor $L(f)$ preserves colimits, so $L(f)(Z)$ is also the colimit of the groupoid object $L(f)(U_\bullet)$ in $\X$. The fact that groupoids in the $\infty$-topos $\X$ are effective implies that $L(f)(U_{\bullet})$ is the \v{C}ech nerve of the morphism (augmentation) $L(f)(q)$. It follows that $L(f)$ sends the pullback diagram
 $$
 \xymatrix{
 U_1 \ar[r] \ar[d] & U_0 \ar[d]^q \\ U_0 \ar[r]_-{q} & Z }
 $$
 to a pullback square in $\X$.
 
 Now let $c,c^{\prime} \in \C$ be arbitrary objects in $\C$ and let $j(c) \rightarrow Z$ and $j(c^{\prime}) \rightarrow Z$ be two morphisms. By the construction of $U_0$, these two morphisms factor through $q \colon U_0 \rightarrow Z$ (up to a choice of homotopy).  The two pullback diagrams labelled $(\ast)$ in the diagram
 \[
 \xymatrix{ W \ar[dd] \ar@{}[rdd]|{(\ast)} \ar[r] & V \ar@{}[rd]|{(\ast)} \ar[d] \ar[r] &  j(c) \ar[d] \\ & U_1 \ar[r] \ar[d] & U_0 \ar[d]^q \\ j(c^{\prime}) \ar[r] & U_0 \ar[r]_q &Z }
 \]
 are preserved by $L(f)$, since $L(f)$ is left exact at small coproducts of representable presheaves. We just showed that the lower left pullback square is also preserved by $L(f)$, hence it follows that $L(f)$ preserves the outer pullback square as well. 
 
Thus Claim~\ref{Claim1} implies that $L(f)$ preserves arbitrary pullback squares. Since $L(f)$ also preserves the terminal object by assumption, it follows that $L(f)$ preserves finite limits. This completes the proof of (2)$'$ $\Rightarrow$ (1).
\end{proof}

\begin{definition}[Flat functor] \label{def:flat_functor}
Let $\C$ be a small $\infty$-category and let $\X$ be an $\infty$-topos. A functor $f \colon \C \to \X$ is called \emph{flat} if it satisfies the equivalent conditions of Theorem \ref{Thm1}. We denote by $\Fun^{\fl}(\C, \X)$ the $\infty$-category of flat functors. 
\end{definition}

\begin{remark}
 The term ``flat functor'' originated in the context of additive categories (see the introduction of \cite{OR}). The left Kan extension $L(f)$ can classically be described as a coend, which is also known as a functor tensor product. Flatness of a functor is thus a natural generalization of the notion of a flat module.
 
 In the context of (1-)topoi, a characterization of flat functors was first given by Diaconescu \cite[4.3]{Di}. The flat functors are characterized by the fact that the total space of their classifying discrete fibration is a filtered category in the internal sense \cite[\S 2]{Di}. In \cite[VII.8--9]{MM}, the notion of \emph{filtering} functors is introduced and shown to coincide with the notion of flatness.
  
 In \cite{Ka}, Karazeris observed that the internal notion of flatness (that is, flatness expressed in terms of geometric logic) makes sense as long as the target category is a category equipped with a Grothendieck topology. In \cite{nLab}, the resulting notion is called a \emph{covering-flat} functor. This notion subsumes all the other notions of flat functor mentioned so far as special cases, see \cite{nLab} for details. In particular, if the target category is a topos endowed with the canonical topology, then the functor is covering-flat if and only if it is filtering. In Section \ref{section:covering_flat_functors}, we will adapt the notion of covering-flat functors to the context of $\infty$-categories. As we will see, in this context, the class of covering-flat functors may differ from the class of flat functors. We do not pursue the direct analogue of Diaconescu's characterization in terms of internal categories.
\end{remark}

For a small $\infty$-category $\C$ and a cocomplete $\infty$-category $\X$, let $\Fun^L(\Pre(\C), \X)$ denote the $\infty$-category of colimit-preserving functors (or, equivalently, left adjoint functors; see \cite[4.1.4]{NRS}). The universal property of the Yoneda embedding $j \colon \C \to \Pre(\C)$ (see \cite[5.1.5]{HTT}, \cite[6.3]{Ci}) shows that the restriction functor
\begin{equation} \label{Yoneda_emb} \tag{*}
\Fun^L(\Pre(\C), \X) \xrightarrow{j^*} \Fun(\C, \X)
\end{equation}
is an equivalence of $\infty$-categories. Let $\Fun^{L, \lex}(\Pre(\C), \X) \subset \Fun^L(\Pre(\C), \X)$ denote the full subcategory spanned by the left exact left adjoint functors. The characterization of Theorem \ref{Thm1} specializes the equivalence \eqref{Yoneda_emb} to the following equivalence between $\infty$-categories of functors.

\begin{corollary} \label{cor:flat_functors}
Let $\C$ be a small $\infty$-category and let $\X$ be an $\infty$-topos. Then restriction along the Yoneda embedding $j \colon \C \to \Pre(\C)$ defines an equivalence of $\infty$-categories  
$$\Fun^{L, \lex}(\Pre(\C), \X) \xrightarrow{j^*} \Fun^{\fl}(\C, \X).$$
In other words, the $\infty$-topos $\Pre(\C)$ is the classifying $\infty$-topos for flat functors. 
\end{corollary}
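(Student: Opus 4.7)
The plan is straightforward: I would obtain the corollary by restricting the known equivalence \eqref{Yoneda_emb} to the appropriate full subcategories, using Theorem~\ref{Thm1} to identify which objects on each side correspond.

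First, I would recall that the universal property of the Yoneda embedding gives an equivalence
\[
j^* \colon \Fun^L(\Pre(\C),\X) \xrightarrow{\ \simeq\ } \Fun(\C,\X),
\]
whose inverse is $f \mapsto L(f)$; in particular, for any $F \in \Fun^L(\Pre(\C),\X)$ the counit identifies $F$ with $L(F \circ j)$, and for any $f \in \Fun(\C,\X)$ one has $L(f) \circ j \simeq f$.

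Next, I would observe that, by Definition~\ref{def:flat_functor}, a functor $f \colon \C \to \X$ is flat precisely when $L(f)$ is left exact, i.e.\ when $L(f)$ belongs to $\Fun^{L,\lex}(\Pre(\C),\X)$. Combined with the identification $F \simeq L(j^*F)$, this shows that $j^*$ sends $\Fun^{L,\lex}(\Pre(\C),\X)$ into $\Fun^{\fl}(\C,\X)$, and that its inverse $L(-)$ sends $\Fun^{\fl}(\C,\X)$ into $\Fun^{L,\lex}(\Pre(\C),\X)$.

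Since both $\Fun^{L,\lex}(\Pre(\C),\X) \subset \Fun^L(\Pre(\C),\X)$ and $\Fun^{\fl}(\C,\X) \subset \Fun(\C,\X)$ are full subcategories, and the equivalence $j^*$ of \eqref{Yoneda_emb} restricts to a bijection on objects between them, the restriction
\[
j^* \colon \Fun^{L,\lex}(\Pre(\C),\X) \longrightarrow \Fun^{\fl}(\C,\X)
\]
is again an equivalence. There is no real obstacle here: the entire content is already packaged in Theorem~\ref{Thm1} and in the classical universal property of $\Pre(\C)$; the corollary is simply the combination of the two.
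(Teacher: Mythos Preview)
Your proof is correct and is exactly the argument the paper intends: the corollary is stated immediately after the observation that Theorem~\ref{Thm1} specializes the equivalence \eqref{Yoneda_emb}, with no further proof given. You have simply spelled out that specialization, and there is nothing to add.
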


Moreover, we also recover the case considered in \cite[6.1.5.2]{HTT} as a special case of Theorem \ref{Thm1}.

\begin{corollary} \label{cor:finite_lim}
Let $\C$ be a small $\infty$-category with finite limits and let $\X$ be an $\infty$-topos. For a functor $f \colon \C \to \X$, the following are equivalent:
\begin{enumerate}
\item $L(f) \colon \Pre(\C) \to \X$ is left exact; 
\item $f$ preserves finite limits. 
\end{enumerate} 
\end{corollary}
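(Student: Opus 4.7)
The plan is to deduce this corollary as a direct special case of Theorem \ref{Thm1}, using only the fact that the Yoneda embedding $j \colon \C \to \Pre(\C)$ preserves all limits that exist in $\C$.

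For the implication $(1) \Rightarrow (2)$, I would use that $f$ factors as $L(f) \circ j$ (up to canonical equivalence), since $L(f)$ is the left Kan extension of $f$ along $j$ and $j$ is fully faithful. Since $j$ preserves finite limits (the Yoneda embedding into presheaves preserves any limits that exist in the source, as representables can be tested by mapping out of representables and $\map(j(c), -)$ commutes with limits), and $L(f)$ is left exact by hypothesis, the composite $L(f) \circ j \simeq f$ preserves finite limits.

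For the converse $(2) \Rightarrow (1)$, I would verify condition (2) of Theorem \ref{Thm1}. Let $K$ be a finite simplicial set and consider a diagram $g \colon K \to \C$ composed with $j$ to give a diagram in $\Pre(\C)$. Since $\C$ has finite limits, $g$ has a limit $c \in \C$, and since $j$ preserves this limit, $j(c)$ is the limit of $j \circ g$ in $\Pre(\C)$. Applying $L(f)$ and using $L(f) \circ j \simeq f$, together with the assumption that $f$ preserves finite limits, we find that $L(f)(j(c)) \simeq f(c)$ is the limit of $f \circ g \simeq L(f) \circ j \circ g$ in $\X$. Thus $L(f)$ preserves finite limits of representable presheaves, so Theorem \ref{Thm1} gives that $L(f)$ is left exact.

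There is no real obstacle here: the corollary is essentially a bookkeeping consequence of Theorem \ref{Thm1} together with the standard fact that $j$ preserves existing limits. The only point that deserves a citation is the preservation of finite limits by the Yoneda embedding (for instance \cite[5.1.3.2]{HTT}), which reduces the representable-limits condition of Theorem \ref{Thm1}(2) to a finite-limit-preservation condition on $f$ itself once $\C$ is assumed to admit finite limits.
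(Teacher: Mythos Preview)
Your proof is correct and follows exactly the same approach as the paper's own proof, which simply notes that $(1) \Rightarrow (2)$ uses that $j$ preserves finite limits (so $f \simeq L(f) \circ j$ does too), and that $(2) \Rightarrow (1)$ follows from Theorem~\ref{Thm1} using that $\C$ has finite limits and $j$ preserves them. You have merely expanded these two sentences into a careful verification of condition~(2) of Theorem~\ref{Thm1}.
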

\begin{proof}
(1) $\Rightarrow$ (2) follows from the fact that $j \colon \C \to \Pre(\C)$ preserves finite limits. (2) $\Rightarrow$ (1) follows from Theorem \ref{Thm1} using that $\C$ has finite limits and $j$ preserves them. 
\end{proof}

\begin{example} \label{infty_grpd}
Let $\G$ be a small $\infty$-groupoid, $\X$ an $\infty$-topos and let $f \colon \G \to \X$ be a functor. Then $L(f) \colon \Pre(\G) \to \X$ is left exact if and only if the colimit of $f$ is the terminal object in $\X$. This follows from Theorem \ref{Thm1} using the observation that pullbacks along equivalences are preserved by any functor and the fact that the terminal object in $\Pre(\G)$ is the colimit of the Yoneda embedding $j \colon \G \to \Pre(\G)$. (The latter is true for presheaves on any small $\infty$-category $\C$. To see this, first note that the colimit of $j$ is given by the left Kan extension of $j$ along the unique functor $\C \rightarrow \Delta^0$. This Kan extension can be computed in two steps, by first taking the left Kan extension along $j \colon \C \rightarrow \Pre(\C)$ and then taking the left Kan extension along $\Pre(\C) \rightarrow \Delta^0$. The left Kan extension of $j$ along itself is the identity, so the second left Kan extension computes the colimit of $\id_{\Pre(\C)}$. Then the claim follows because the colimit of the identity functor is given by the terminal object.)

In addition, assuming that $\G$ has a single object and $\X = \Sp$ is the $\infty$-topos of spaces, then this condition is equivalent to the condition that $f$ is given by the free $\G$-space $\G$, that is, the corepresentable functor $\G \to \Sp$. Moreover, in this case, the $\infty$-category $\Fun^{L, \lex}(\Pre(\G), \Sp) \simeq \Fun^{\fl}(\G, \Sp)$ is equivalent to $\G$ (by the Yoneda lemma).
\end{example}

\section{Covering-flat functors}\label{section:covering_flat_functors}

\subsection{Diaconescu's theorem for $\infty$-topoi} The difficulty with the characterizations of flat functors $f \colon \C \to \X$ obtained in Theorem \ref{Thm1} is that they involve the functor $L(f) \colon \Pre(\C) \to \X$. 

The purpose of this section is to obtain another characterization of flat functors (Theorem \ref{Thm2}) which depends directly only on $f$ and is analogous to the classical characterization in Diaconescu's theorem \cite{Di, MM} for the context of ordinary topoi. This characterization is based on the following definition of \emph{covering-flat} functors (which is a direct generalization of the notion for 1-categories introduced in \cite[VII.8.1, VII.8.4]{MM}). We will define this notion for general functors $f \colon \C \to \X$ where $\X$ is an $\infty$-category which admits finite limits, but we will only be interested in the case where $\X$ is an ($n$-)topos. 

First we recall some well known terminology. Given an $\infty$-category $\X$ which admits finite limits, we say that a morphism $(q \colon u \to y)$ in $\X$ is an \emph{effective epimorphism} if the right Kan extension of $u$ to an augmented (semi-)simplicial object (\v{C}ech nerve) is a colimit diagram (cf.\ \cite[6.2.3]{HTT}).  We say that a set of morphisms $\{y'_i \to y\}_{i \in I}$ is a \emph{covering family} of $y \in \X$ if the canonical map
$$\bigsqcup_{i \in I} y'_i \longrightarrow y$$
is an effective epimorphism in $\X$.

\begin{definition} \label{covering_condition} Let $\C$ be a small $\infty$-category, let $\X$ be an $\infty$-category which admits finite limits, and let $f \colon \C \to \X$ be a functor.
\begin{itemize}
\item[(a)] For any simplicial set $K$, any $K$-diagram $g \colon K \to \C$ and any cone on the composite diagram $h = f g \colon \C \to \X$, 
$$h^{\triangleleft} \colon K^{\triangleleft} \to \X,$$
with cone object $y \in \X$, let $S_{g, h^{\triangleleft}}$ denote the collection of morphisms in $\X$
$$(y' \xrightarrow{t} y)$$
with the following property: the cone $ h^{\triangleleft} \circ t$ obtained (essentially uniquely) by precomposition of $h^{\triangleleft}$ with $t$ admits a 
morphism of cones 
 in $\X_{/h}$ to a cone of the form $fg^{\triangleleft} \colon K^{\triangleleft} \to \C \xrightarrow{f} \X$ for some cone $g^{\triangleleft} \colon K^{\triangleleft} \to \C$ on $g$. 
\item[(b)] We say that $f$ is \emph{covering-flat} if for any finite simplicial set $K$, any $K$-diagram $g \colon K \to \C$ and any cone $h^{\triangleleft} \colon K^{\triangleleft} \to \X$ on $h = f g$ with cone object $y \in \X$, there is a covering familiy of $y \in \X$ which consists of morphisms $(y' \to y)$ in $S_{g, h^{\triangleleft}}$. 
\end{itemize}
\end{definition}  

\begin{remark}
 Note that $S_{g, h^{\triangleleft}}$ is a sieve on the cone object $y \in \X$. If $\X$ is an $\infty$-topos, we can thus rephrase the condition of being covering-flat as follows: for each $g \colon K \rightarrow \C$ as above, the sieve $S_{g, h^{\triangleleft}}$ is a covering sieve for the canonical topology on $\X$. (In \cite[6.2.4.1]{HTT}, the canonical topology is only defined on a \emph{small} $\infty$-category $\C$ equipped with a left exact functor $f \colon \C \rightarrow \X$, but we can also apply this definition to the case $f=\id_{\X}$. The proof that this construction gives a Grothendieck topology \cite[6.2.4.2]{HTT} does not rely on the smallness of $\C$.) We can naturally extend this notion to functors whose target is a site. If $f \colon \C \rightarrow \D$ is a functor and $\D$ is equipped with a Grothendieck topology, then we call $f$ \emph{covering-flat} if for each for each $g \colon K \rightarrow \C$ as above and each cone $h^{\triangleleft}$ on $h=fg$, the sieve $S_{g, h^{\triangleleft}}$ belongs to the Grothendieck topology on $\D$. This notion generalizes the notion of covering-flatness considered in \cite{Ka, nLab} for 1-categories. 
\end{remark} 
 
In the main cases of interest, the definition of a covering-flat functor can be simplified as follows. In this article, we will only consider the case where $\X$ is a $n$-topos. 

\begin{lemma} \label{lem_cov-flat}
Let $f \colon \C \to \X$ be as in Definition \ref{covering_condition} where $\X$ is a presentable $\infty$-category with universal colimits. Then $f$ is covering-flat if (and only if) $f$ satisfies the condition of Definition \ref{covering_condition}(b) for those triples $(K, g \colon K \to \C, h^{\triangleleft} \colon K^{\triangleleft} \to \X)$ for which $h^{\triangleleft}$ is a limit cone of $h = fg \colon \C \to \X$. 
\end{lemma}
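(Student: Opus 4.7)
The ``only if'' direction is automatic since limit cones are cones. For the converse, my plan is to deduce the general case from the limit case by a pullback argument, leveraging the universality of colimits in $\X$. Given a finite simplicial set $K$, a diagram $g \colon K \to \C$, and a cone $h^{\triangleleft} \colon K^{\triangleleft} \to \X$ on $h = fg$ with cone object $y$, I set $y_{\lim} = \lim h$ (which exists by presentability of $\X$) and let $h^{\triangleleft}_{\lim}$ denote the universal limit cone. The universal property supplies an essentially unique morphism $\phi \colon y \to y_{\lim}$ with $h^{\triangleleft} \simeq h^{\triangleleft}_{\lim} \circ \phi$.

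By hypothesis applied to the limit cone, there is a covering family $\{t_i \colon y'_i \to y_{\lim}\}_{i \in I}$ contained in $S_{g, h^{\triangleleft}_{\lim}}$. The plan is to form the base changes $t'_i \colon y'_i \times_{y_{\lim}} y \to y$ along $\phi$ and to verify both that $\{t'_i\}_{i \in I}$ is a covering family of $y$ and that each $t'_i$ lies in $S_{g, h^{\triangleleft}}$. The covering property holds because $\bigsqcup_i y'_i \to y_{\lim}$ is an effective epimorphism and universality of colimits in $\X$ implies that effective epimorphisms are stable under pullback (concretely, pulling back the \v{C}ech nerve augmentation along $\phi$ gives the \v{C}ech nerve augmentation of the pulled-back map, which is again a colimit by universality). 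For the membership, the witness for $t_i$ supplies a cone $g_i^{\triangleleft}$ on $g$ and a morphism of cones $y'_i \to f(c_i)$ in $\X_{/h}$ from $h^{\triangleleft}_{\lim} \circ t_i$ to $fg_i^{\triangleleft}$; composing with the projection $\pi_1 \colon y'_i \times_{y_{\lim}} y \to y'_i$ and using the identification $h^{\triangleleft} \circ t'_i \simeq h^{\triangleleft}_{\lim} \circ t_i \circ \pi_1$ that follows from $h^{\triangleleft} \simeq h^{\triangleleft}_{\lim} \circ \phi$ then yields the required morphism of cones from $h^{\triangleleft} \circ t'_i$ to $fg_i^{\triangleleft}$.

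The main obstacle I anticipate is the $\infty$-categorical coherence: the factorization $h^{\triangleleft} \simeq h^{\triangleleft}_{\lim} \circ \phi$ and the resulting identification $h^{\triangleleft} \circ t'_i \simeq h^{\triangleleft}_{\lim} \circ t_i \circ \pi_1$ are only defined up to homotopy and must be tracked in the slice $\X_{/h}$ of cones on $h$, rather than merely at the level of cone objects in $\X$. Once these coherences are organized correctly, the rest reduces to a straightforward application of universality of colimits and its standard consequence that effective epimorphisms are pullback-stable.
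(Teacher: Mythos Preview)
Your argument is correct and follows essentially the same route as the paper: factor the given cone through the limit cone, apply the hypothesis there, and pull the resulting covering family back along the comparison map $y \to y_{\lim}$, using that coproducts and effective epimorphisms are stable under pullback in a presentable $\infty$-category with universal colimits. The paper's proof is terser and does not dwell on the coherence bookkeeping you flag, but the underlying idea is identical.
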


\begin{proof}
Given $(K, g \colon K \to \C, h^{\triangleleft} \colon K^{\triangleleft} \to \X, y)$ as in Definition~\ref{covering_condition}(b), we may factor $h^{\triangleleft}$ through the limit cone $(fg)^{\triangleleft} \colon K^{\triangleleft} \to \X$ of $fg$ with cone object $\overline{y}\in \X$. By assumption, we may find a covering family $\{y'_i \to \overline{y}\}_{i \in I}$ given by morphisms in $S_{g, (fg)^{\triangleleft}}$. Pulling back this covering family along the morphism of cone objects $y \to \overline{y}$, we obtain the required covering family of $y \in \X$ (using that coproducts and effective epimorphisms in $\X$ are preserved under pullback; see the proof of \cite[6.2.3.15]{HTT}) by morphisms which now lie in $S_{g, h^{\triangleleft}}$. 
\end{proof}

Thus a covering-flat functor $f \colon \C \to \X$ to an $\infty$-topos $\X$ is roughly a functor for which finite limits of diagrams $K \xrightarrow{g} \C \xrightarrow{f} \X$ factor locally through images of cones on $g$. This condition can be regarded as a type of weak left exactness locally. 

\begin{proposition} \label{flat_vs_cov-flat}
Let $\C$ be a small $\infty$-category and let $\X$ be an $\infty$-topos. If $f \colon \C \to \X$ is a flat functor (Definition \ref{def:flat_functor}), then $f$ is also covering-flat (Definition \ref{covering_condition}).
\end{proposition}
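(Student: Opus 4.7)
The plan is to pass a chosen finite diagram $g\colon K \to \C$ through the left Kan extension $L(f)$ and use flatness (i.e.\ left exactness of $L(f)$) to convert the universal property of limits in $\Pre(\C)$ into an effective epimorphism of the required kind in $\X$.

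First, by Lemma~\ref{lem_cov-flat}, it suffices to verify the covering-flat condition for triples $(K,g,h^{\triangleleft})$ in which $h^{\triangleleft}$ is the limit cone of $h=fg\colon K\to\X$, so with cone object $y\simeq \lim fg$. Form the limit $Z:=\lim(jg)$ in $\Pre(\C)$. Since $L(f)$ is left exact and $L(f)\circ j\simeq f$, we have $L(f)(Z)\simeq\lim fg\simeq y$, and moreover $L(f)$ sends the limit cone of $jg$ in $\Pre(\C)$ to the limit cone $h^{\triangleleft}$ of $fg$ in $\X$.

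Next, I would write $Z$ canonically as a colimit of representables by taking, as in the proof of Theorem~\ref{Thm1}, one representative $\alpha\colon j(c_{\alpha})\to Z$ from every homotopy class in $\map(j(c),Z)$ for every $c\in\C$, giving an effective epimorphism
\[
q\colon \bigsqcup_{\alpha} j(c_{\alpha})\longrightarrow Z
\]
in $\Pre(\C)$. By the universal property of the limit $Z=\lim jg$ together with the full faithfulness of the Yoneda embedding $j$, each $\alpha\colon j(c_{\alpha})\to Z$ corresponds to an honest cone $g^{\triangleleft}_{\alpha}\colon K^{\triangleleft}\to\C$ on $g$ with cone object $c_{\alpha}$. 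Applying $L(f)$ to $q$, and using that $L(f)$ preserves colimits and sends effective epimorphisms to effective epimorphisms in the $\infty$-topos $\X$, we obtain an effective epimorphism
\[
\bigsqcup_{\alpha} f(c_{\alpha})\longrightarrow y,
\]
so the family $\{\,t_{\alpha}\colon f(c_{\alpha})\to y\,\}_{\alpha}$ is a covering family of $y$ in $\X$.

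Finally, I need to check that each $t_{\alpha}$ lies in the sieve $S_{g,h^{\triangleleft}}$. By construction the cone $jg^{\triangleleft}_{\alpha}$ on $jg$ in $\Pre(\C)$ is the composite of the limit cone for $jg$ with the morphism $\alpha\colon j(c_{\alpha})\to Z$. Applying the left exact functor $L(f)$ and using $L(f)(Z)\simeq y$, this identifies the cone $fg^{\triangleleft}_{\alpha}$ on $fg$ with the cone $h^{\triangleleft}\circ t_{\alpha}$ in $\X$. The identity of $f(c_{\alpha})$ then serves as the required morphism of cones in $\X_{/h}$, so $t_{\alpha}\in S_{g,h^{\triangleleft}}$, completing the argument.

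The main delicate step is the bookkeeping in the last paragraph: one must argue carefully, up to coherent homotopy, that the two cones on $fg$ with cone object $f(c_{\alpha})$ — the one coming from factoring $\alpha$ through the limit in $\Pre(\C)$ and then applying $L(f)$, versus the one coming from restricting the limit cone $h^{\triangleleft}$ along $t_{\alpha}$ — are equivalent in $\X_{/h}$. Everything else is a straightforward application of left exactness of $L(f)$, Yoneda, and the descent-type properties of $\infty$-topoi (universal colimits, stability of effective epimorphisms under colimits and pullback).
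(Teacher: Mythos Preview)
Your proposal is correct and follows essentially the same approach as the paper's proof: reduce to limit cones via Lemma~\ref{lem_cov-flat}, identify $y$ with $L(f)(Z)$ for $Z=\lim jg$ using left exactness, cover $Z$ by representables, push forward the resulting effective epimorphism via $L(f)$, and use Yoneda to translate maps $j(c)\to Z$ into cones on $g$ in $\C$. The only point you leave slightly implicit is \emph{why} $L(f)$ preserves effective epimorphisms: the paper spells out that this follows because $L(f)$ preserves both colimits and finite limits, hence \v{C}ech nerves and their colimits.
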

\begin{proof}
Let $(K, g \colon K \to \C, h^{\triangleleft} \colon K^{\triangleleft} \to \X, y)$ be as in Definition \ref{covering_condition}(b) where $h^{\triangleleft}$ is a limit cone (using Lemma \ref{lem_cov-flat}). If $f \colon \C \to \X$ is flat, then $L(f)$ preserves finite limits by Theorem \ref{Thm1}, so we may assume that $h^{\triangleleft}$ is the cone $L(f) \circ (jg)^{\triangleleft}$ where $(jg)^{\triangleleft} \colon K^{\triangleleft} \to \Pre(\C)$ is a limit cone on $jg \colon K \to \C \to \Pre(\C)$ with cone object denoted by $Z \in \Pre(\C)$. We recall that $L(f)$ preserves effective epimorphisms because $L(f)$ preserves both colimits and finite limits, so it preserves \v{C}ech nerves and their colimits. Then the image under $L(f)$ of a covering family $\{q_i \colon j(c_i) \rightarrow Z\}_{i \in I}$ of $Z$ by representable presheaves defines a covering family of $L(f)(Z)=y$. Moreover, each such morphism $(j(c) \to Z)$ corresponds (using the Yoneda embedding) to a cone $g^{\triangleleft} \colon K^{\triangleleft} \rightarrow \C$ on $g$. This shows that the covering family $\{L(f)(q_i) \colon f(c_i) \to y\}_{i \in I}$ is in $S_{g, h^{\triangleleft}}$ as required.
\end{proof}

Our main result is that the converse of Proposition \ref{flat_vs_cov-flat} also holds. Interestingly, this requires hypercompleteness (see Remark \ref{hypercompleteness}). We refer to \cite[6.5]{HTT} for a detailed account of hypercomplete objects and hypercompletion in higher topos theory.   
 
\begin{theorem} \label{Thm2}
Let $\C$ be a small $\infty$-category and let $\X$ be a hypercomplete $\infty$-topos. Then a functor $f \colon \C \to \X$ is flat if and only if $f$ is covering-flat.
\end{theorem}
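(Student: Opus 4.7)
Proposition~\ref{flat_vs_cov-flat} already gives flat $\Rightarrow$ covering-flat, so the substance of the theorem is the converse. Assuming $f \colon \C \to \X$ is covering-flat with $\X$ hypercomplete, I aim to deduce flatness. By Theorem~\ref{Thm1}(2)$'$ it is enough to show that $L(f)$ preserves the terminal object and pullbacks of representable presheaves; in each case the task becomes proving that the canonical comparison
$$\varphi_{K,g}\colon L(f)(\lim jg) \longrightarrow \lim fg$$
is an equivalence in $\X$, for $K$ either $\varnothing$ or a cospan and $g \colon K \to \C$. Since $\X$ is hypercomplete, an equivalence coincides with an $\infty$-connective morphism, so my goal reduces to proving that $\varphi_{K,g}$ is $n$-connective for every $n \geq 0$.

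The base case $n=0$ is the immediate payoff of covering-flatness. Writing $\overline{y} = \lim fg$ and $y = L(f)(\lim jg)$, I apply Lemma~\ref{lem_cov-flat} and the covering-flat condition to the limit cone on $fg$, obtaining a covering family $\{t_i \colon y_i' \to \overline{y}\}$ in which each $t_i$ factors as $y_i' \to f(c_i) \to \overline{y}$, with the second morphism arising as the image under $f$ of a cone $g_i^{\triangleleft} \colon K^{\triangleleft} \to \C$ on $g$ with apex $c_i$. Each such cone transports through the Yoneda embedding to a morphism $j(c_i) \to \lim jg$, which under $L(f)$ produces a lift $f(c_i) \to y$ of $f(c_i) \to \overline{y}$ through $\varphi_{K,g}$. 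Hence the family $\{t_i\}$ lifts through $\varphi_{K,g}$, which is therefore an effective epimorphism.

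For higher $n$ I would argue inductively, using that $\varphi_{K,g}$ is $(n+1)$-connective exactly when its iterated diagonals are effective epimorphisms. The iterated diagonals of $\varphi_{K,g}$ should be controlled by covering-flat data attached to enlarged finite diagrams built from $K$ by amalgamating copies, so that the first diagonal corresponds to the comparison map for ``pairs of cones on $g$ with matching $K$-legs'', the second to ``triples'', and so on. Finiteness of $K$ ensures that each enlarged diagram is again finite, so the covering-flat hypothesis continues to apply and the argument of the base case recycles to produce covering families witnessing effective epi-ness of each iterated diagonal. The main technical difficulty, and the hard part of the proof, will be setting up these amalgamated diagrams precisely and verifying that their covering-flat data really assemble into the required diagonal lifts. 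With $n$-connectivity established for every $n$, hypercompleteness of $\X$ upgrades $\varphi_{K,g}$ to an equivalence, and Theorem~\ref{Thm1}(2)$'$ then concludes that $L(f)$ is left exact, i.e.\ $f$ is flat. Hypercompleteness enters precisely in this final promotion, which matches the subsequent remark that without it covering-flat functors need not be flat.
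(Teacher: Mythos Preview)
Your overall strategy is sound and is morally the same as the paper's: use hypercompleteness to reduce the comparison map $\varphi_{K,g}$ to an $\infty$-connectivity statement, and then feed covering-flatness in level by level. Your base case ($0$-connectivity) is exactly right and matches what happens at degree $0$ in the paper's argument.

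The gap is in the inductive step, and it is not merely a bookkeeping issue. The first diagonal of $\varphi_{K,g}$ is the map
\[
L(f)\bigl(\lim_K jg\bigr)\;\longrightarrow\; L(f)\bigl(\lim_K jg\bigr)\times_{\lim_K fg} L(f)\bigl(\lim_K jg\bigr),
\]
and its target is \emph{not} of the form $\lim_{K'} fg'$ for some finite $g'\colon K'\to\C$, because it involves the object $L(f)(\lim_K jg)$ that you do not yet control. So you cannot simply re-apply the covering-flat hypothesis to an ``amalgamated'' shape $K' = K^{\triangleleft}\cup_K K^{\triangleleft}$: the map you need to show is an effective epimorphism does not have a target built only from $f$-images of objects of $\C$. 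The recursion you sketch (``pairs of cones'', ``triples'', \dots) does not close up on its own.

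The paper's resolution is to abandon the bare iterated-diagonal picture and instead build, in $\Pre(\C)_{/Z(g)}$, a canonical semi-simplicial hypercovering $U^{Z(g)}_{\bullet}$ whose levels are coproducts of representables (Subsection~\ref{criteria_flatness}). The key technical input is then a decomposition lemma (Lemma~\ref{decomposition_points}) showing that each matching object of such a diagram breaks up as a coproduct, indexed by ``special points'', of limits of finite diagrams of representables; and a further claim (Claim~\ref{decomposition_matching}) showing that this decomposition survives after applying $L(f)$. Once the matching object $M_n(V_{\bullet})$ is exhibited as $\bigsqcup_{\alpha} \lim_{L_n\ast K}\bigl(f\circ U'[n,g]_{\alpha}\bigr)$ with each $U'[n,g]_{\alpha}\colon L_n\ast K\to\C$ a genuine finite diagram in $\C$, covering-flatness applies summand by summand exactly as in your base case, yielding the required effective epimorphisms $V_n\to M_n(V_{\bullet})$. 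Hypercompleteness (via Corollary~\ref{criterion_flatness2}) then finishes. In short: the ``amalgamated diagrams'' you are looking for are the $U'[n,g]_{\alpha}$ on $L_n\ast K$, and the work is in proving the decomposition of the matching objects that makes them visible.
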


\begin{remark} \label{hypercompleteness} The hypercompleteness assumption of Theorem \ref{Thm2} is necessary. This is not surprising given that the notion of covering-flat functor depends only on the effective epimorphisms of $\X$. For an explicit example, consider a small $\infty$-category $\D$ with a Grothendieck topology $\tau$ such that the associated $\infty$-topos of sheaves $\X := \Sh(\D, \tau)$ is not hypercomplete (see \cite[6.5.4]{HTT}). Let $x \in \X$ be an $\infty$-connective object which is not terminal in $\X$. Then the functor $f \colon \ast \to \X$, $f(\ast) = x$, is clearly not flat, but $f$ is covering-flat. To see this, it suffices to prove that for any finite simplicial set $K$, the limit $y := x^K$ of the constant $K$-diagram at the object $x$ admits an effective epimorphism  
$$(y' \to y)$$
with the property that the composite cone with cone object $y'$ factors through the trivial cone with cone object $x$. Note that the canonical morphism $(x \to y)$ clearly has this factorization property. Moreover, $(x \to y)$ is an effective epimorphism because this property can be detected by passing to the $n$-truncations (see \cite[7.2.1.14]{HTT}) and these yield terminal objects in this case.
\end{remark}

\subsection{Canonical semi-simplicial hypercoverings} \label{criteria_flatness} 
The proof of Theorem \ref{Thm2} is based on an elementary criterion for flatness which uses the construction of certain (semi-)simplicial hypercoverings in $\Pre(\C)$.

Let $\X$ be an $\infty$-topos and $X \in \X$. A simplicial diagram $U_{\bullet} \colon \Delta^{\mathrm{op}} \rightarrow \X_{ \slash X}$ is called a hypercovering of $X$ if the morphisms
\[
U_n \rightarrow (\mathrm{cosk}_{n-1} U_\bullet)_n
\]
are effective epimorphisms for all $n \geq 0$ (see \cite[6.5.3.2]{HTT}). A hypercovering of $X$ is called \emph{effective} if its colimit is $X$ (that is, the terminal object of $\X_{\slash X}$).  An $\infty$-topos is hypercomplete if and only if every hypercovering is effective \cite[6.5.3.12]{HTT}.

In practice, it is often simpler to construct semi-simplicial hypercoverings instead. These are indexed by the category $\Delta_{<}^{\mathrm{op}}$, where $\Delta_{<}$ denotes the subcategory of $\Delta$ consisting of injective order-preserving maps. A semi-simplicial object $U_{\bullet}$ of $\X_{\slash X}$ is called a \emph{semi-simplicial hypercovering} if for all $n \geq 0$, the canonical morphism
 \[
 U_n \rightarrow \underset{[k] \hookrightarrow [n], k < n}{\mathrm{lim}} U_k
\]
 is an effective epimorphism (see \cite[A.5.2.3, A.5.1.6]{SAG}). By \cite[A.5.3.3]{SAG}, the colimit of a semi-simplicial hypercovering $U_{\bullet}$ of $X \in \X$ is $\infty$-connective in $\X_{/X}$. Thus, in the case where $\X$ is hypercomplete, we again find that the colimit of a semi-simplicial hypercovering $U_{\bullet}$ in $\X_{\slash X}$ is the terminal object given by $X$. 

\begin{notation}
Let $\C$ be a small $\infty$-category, let $\X$ be an $\infty$-topos, and let $f \colon \C \to \X$ be a functor. Given a diagram of representable presheaves $K \xrightarrow{g} \C \to \Pre(\C)$ with limit denoted by $Z(g) \in \Pre(\C)$, we write $X(g) \in \X$ for the limit of the composite diagram $K \xrightarrow{g} \C \xrightarrow{f} \X$ and
$$c_g \colon L(f)\bigl(Z(g)\bigr) \to X(g)$$
for the canonical comparison morphism in $\X$. Given a semi-simplicial object (or any other diagram) $U_{\bullet}$ in $\Pre(\C)_{/Z(g)}$, we obtain a semi-simplicial object $L(f)_* U_{\bullet}$ in $\X_{/X(g)}$ given by application of the functor $L(f)$ and postcomposition with $c_g$.
\end{notation}

\begin{lemma} \label{criterion_flatness}
Let $\C$ be a small $\infty$-category, let $\X$ be a hypercomplete $\infty$-topos, and let $f \colon \C \to \X$ be a functor. Suppose that for every finite diagram of representable presheaves $$K \xrightarrow{g} \C \to \Pre(\C),$$ there is a (semi-simplicial) hypercovering $U^g_{\bullet}$ in $\Pre(\C)_{/Z(g)}$ with the property that $L(f)_* U^g_{\bullet}$ is again a (semi-simplicial) hypercovering in $\X_{/X(g)}$. Then $f$ is a flat functor (i.e., $L(f)$ is left exact). 
\end{lemma}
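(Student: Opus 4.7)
The plan is to verify criterion (2) of Theorem~\ref{Thm1}: it suffices to show that for every finite simplicial set $K$ and every diagram of representable presheaves $g\colon K \to \C \to \Pre(\C)$, the canonical comparison $c_g \colon L(f)\bigl(Z(g)\bigr) \to X(g)$ is an equivalence in $\X$.

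The first observation I would make is that $\Pre(\C)$ is itself hypercomplete. Indeed, limits, truncations, and hence $\infty$-connectivity in $\Pre(\C) = \Fun(\C^{\op}, \Sp)$ are computed pointwise in $\Sp$, which is hypercomplete; so every $\infty$-connective morphism in $\Pre(\C)$ is already an equivalence. By \cite[A.5.3.3]{SAG}, the colimit of the hypothesized hypercovering $U^g_\bullet$ is $\infty$-connective over $Z(g)$, and hence $\colim_n U^g_n \simeq Z(g)$ in $\Pre(\C)_{/Z(g)}$. Applying $L(f)$, which preserves colimits, I obtain
\[
L(f)\bigl(Z(g)\bigr) \simeq \colim_n L(f)(U^g_n)
\]
in $\X$, and this identification is compatible with the structural morphisms to $X(g)$.

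On the $\X$-side, the hypothesis that $L(f)_* U^g_\bullet$ is a semi-simplicial hypercovering in $\X_{/X(g)}$, together with hypercompleteness of $\X$, gives $\colim L(f)_* U^g_\bullet \simeq X(g)$ in $\X_{/X(g)}$. By construction the structural morphism $L(f)(U^g_n) \to X(g)$ of the $n$-th term of $L(f)_* U^g_\bullet$ is the composite $c_g \circ L(f)(U^g_n \to Z(g))$; consequently the universal map from $\colim_n L(f)(U^g_n)$ to $X(g)$ coincides, under the identification from the previous paragraph, with $c_g$. Therefore $c_g$ is an equivalence, and Theorem~\ref{Thm1} then implies that $L(f)$ is left exact, i.e.\ $f$ is flat.

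The main obstacle I anticipate is essentially bookkeeping: one has to verify carefully that the two colimit identifications — the one in $\Pre(\C)$ coming from hypercompleteness of the presheaf $\infty$-topos, and the one in $\X$ coming from hypercompleteness of $\X$ — assemble into a commutative diagram whose comparison edge is precisely $c_g$. This is naturally phrased by viewing $U^g_\bullet$ and $L(f)_* U^g_\bullet$ as augmented semi-simplicial objects with augmentations $Z(g)$ and $X(g)$ respectively, and observing that $L(f)$ maps the first augmented diagram to a diagram receiving a canonical map to the second augmented diagram whose edge on augmentations is $c_g$. Once this coherence is in place, the argument reduces to the two applications of hypercompleteness described above.
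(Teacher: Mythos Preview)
Your proposal is correct and follows essentially the same approach as the paper's proof: reduce to showing $c_g$ is an equivalence via Theorem~\ref{Thm1}, use hypercompleteness of $\Pre(\C)$ to identify $Z(g)$ as the colimit of $U^g_\bullet$, push this through the colimit-preserving $L(f)$, and then use hypercompleteness of $\X$ to identify $X(g)$ as the colimit of $L(f)_* U^g_\bullet$. Your added justification that $\Pre(\C)$ is hypercomplete and your explicit attention to the augmentation bookkeeping are both apt and match what the paper leaves implicit.
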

\begin{proof}
By Theorem \ref{Thm1}, it suffices to show that $L(f)$ preserves finite limits of representable presheaves. Let $K \xrightarrow{g} \C \xrightarrow{j} \Pre(\C)$ be a diagram of representable presheaves, where $K$ is a finite simplicial set, and let $Z(g) \in \Pre(\C)$ denote a limit of $jg$. We need to show that the canonical morphism
$$c_g \colon L(f)(Z(g)) \longrightarrow X(g) := \mathrm{lim}_K (K \xrightarrow{g} \C \xrightarrow{f} \X)$$
is an equivalence in $\X$. By assumption, there is a (semi-simplicial) hypercovering $U^g_{\bullet}$ in $\Pre(\C)_{/Z(g)}$ with the property that $L(f)_* U^g_{\bullet}$ is a (semi-simplicial) hypercovering in $\X_{/X(g)}$. Since $\Pre(\C)$ is hypercomplete, the associated augmented (semi-) simplicial object in $\Pre(\C)$, also denoted by $U^g_{\bullet}$, is a colimit diagram with colimit $Z(g) \in \Pre(\C)$. The functor $L(f)$ preserves colimits, so $L(f)(U^g_{\bullet})$ is a colimit diagram in $\X$ with colimit $L(f)(Z(g))$. On the other hand, since $\X$ is hypercomplete, the (semi-simplicial) hypercovering $L(f)_* U^g_{\bullet}$ in $\X_{/X(g)}$ also defines a colimit diagram in $\X$ with colimit $X(g)$. It follows that the comparison map $c_g$ must be an equivalence, as required.
\end{proof} 

\begin{remark} \label{variations_criterion_flatness}
We mention some useful variations and simplifications of Lemma \ref{criterion_flatness}. First, by Theorem \ref{Thm1}, it suffices to consider only the cases of the terminal object and pullbacks (instead of an arbitrary finite simplicial set $K$). Second, the criterion applies to any functor $f \colon \C \to \X$, without any hypercompleteness assumptions on $\X$, if we require instead that the induced (semi-)simplicial object $L(f)_* U^g_{\bullet}$ determines a colimit diagram (the proof is the same).   
\end{remark}

In order to apply Lemma~\ref{criterion_flatness}, we need a general construction of a semi-simplicial hypercovering of a presheaf in $\Pre(\C)$. We will construct this hypercovering inductively, using the following standard technique for constructing (homotopy coherent) diagrams on inverse categories.

 Recall that a finite (1-)category $I$ is called an \emph{inverse category} if there exists a function $\mathrm{deg} \colon \mathrm{Ob}(I) \rightarrow \{0, \ldots, n\}$, $n \geq 0$, called the degree function, with the property that any non-identity morphism in $I$ lowers the degree. We are mainly interested in the full subcategory of $\Delta_{<}^{\op}$ spanned by $[k]$ with $k \leq n$;  this is a (finite) inverse category with degree function which sends $[k]$ to $k$. The following lemma shows that we can construct diagrams on such inverse categories iteratively. If we restrict attention to 1-categories, this is a special case of the iterative construction of diagrams indexed by a Reedy category.
 
 \begin{lemma}\label{lemma:inverse_category}
 Let $I$ be a finite inverse (1-)category with degree function 
 \[
 \mathrm{deg} \colon \mathrm{Ob}(I) \rightarrow \{0, \ldots, n\}
 \]
 and let $i \in I$ be an object with $\mathrm{deg}(i)=n$. Let $J$ be the full subcategory on $\mathrm{Ob}(I) \setminus \{i\}$.
 
 Let $\C$ be an $\infty$-category with finite limits and let $U \colon J \rightarrow \C$ be a diagram. Then an extension of $U$ to a diagram $I \rightarrow \C$ is uniquely determined by an object $U_i \in \C$ and a morphism $U_i \rightarrow M_i U$, where $M_i U$ denotes the limit of the diagram in $\C$
 \[
J_{i/} \xrightarrow{(i \to j) \mapsto j} J \xrightarrow{U} \C
 \]
 and $J_{i/} = J \times_I I_{i/}$. The extension of the diagram $U$ sends the object $i \in I$ to $U_i$.
 \end{lemma}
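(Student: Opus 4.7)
The plan is to present $I$ as a pushout of $1$-categories and apply $\Fun(-,\C)$ to reduce the statement to the universal property of limits in $\C$.

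First I would observe that, since $i$ has maximal degree $n$ and every non-identity morphism in $I$ strictly lowers degree, no non-identity morphism in $I$ can have target $i$. In particular, the only morphisms in $I$ between $i$ and objects of $J$ are of the form $i \to j$ with $j \in J$, and these are parametrized exactly by the objects of $K := J_{i/}$. Writing $K^{\triangleleft} = \{i\} \star K$ for the left cone on $K$ (with cone point $i$), the next step is to verify that the canonical functor
$$P := J \cup_K K^{\triangleleft} \longrightarrow I,$$
which extends the inclusion $J \hookrightarrow I$ and sends the cone point of $K^{\triangleleft}$ to $i$, is an isomorphism of $1$-categories. Bijectivity on objects is immediate; bijectivity on morphism sets reduces to the observation that the objects of $K$ are precisely the morphisms $i \to j$ in $I$, while every composite $i \to j \to j'$ in $I$ is recorded by the forgetful functor $K \to J$ together with composition in $J$.

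Once this pushout presentation is in hand, the map $K \hookrightarrow K^{\triangleleft}$ is a monomorphism of (nerves of) categories, so the pushout is a homotopy pushout in $\infty$-categories. Applying $\Fun(-,\C)$ then yields an equivalence
$$\Fun(I, \C) \simeq \Fun(J, \C) \times_{\Fun(K, \C)} \Fun(K^{\triangleleft}, \C).$$
Passing to the homotopy fiber over a fixed diagram $U \colon J \to \C$, an extension of $U$ to $I$ corresponds to an extension of $U|_K \colon K \to \C$ to a functor $K^{\triangleleft} \to \C$, i.e., to a cone on $U|_K$. By the universal property of the limit $M_i U = \lim(U|_K)$, such a cone is the same datum as an object $U_i \in \C$ (the value at the cone point $i$) together with a morphism $U_i \to M_i U$, equivalently an object of the slice $\C_{/M_i U}$.

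The main obstacle will be verifying that $P \to I$ is an isomorphism of $1$-categories: this is ultimately a finite combinatorial check, but it requires tracking the morphisms freely added in the pushout, matching them with those of $I$, and confirming that the degree function precludes any further relations. Once this decomposition is established, the rest of the argument is a formal consequence of the universal property of cones in $\C$.
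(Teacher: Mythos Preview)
Your approach is essentially the same as the paper's. The paper presents the pushout as $I \cong J \cup_{J_{i/}} I_{i/}$ and then separately identifies $I_{i/} \cong \Delta^0 \ast J_{i/}$, which is exactly your $K^{\triangleleft}$; it then verifies the pushout square directly at the level of $k$-simplices of the nerves rather than on objects and morphisms of $1$-categories, which is what is actually needed to conclude that $\Fun(-,\C)$ takes it to a pullback for an $\infty$-category $\C$. Your sketch is correct, but when you carry out the ``finite combinatorial check'' you should do it simplicially (a chain $i_0 \to \cdots \to i_k$ in $I$ either lies entirely in $J$ or has $i_0 = i$ and hence lies in $K^{\triangleleft}$), since verifying only that $P \to I$ is an isomorphism of $1$-categories does not by itself guarantee that the nerve of $P$ agrees with the simplicial-set pushout you need for the homotopy-pushout argument.
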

 
 \begin{proof}
 Consider the pullback diagram of simplicial sets (here and elsewhere we identify ordinary categories with their nerves)
 \[
 \xymatrix{J_{i/} \ar[r] \ar[d] & J \ar[d] \\
  I_{i/} \ar[r] & I}
 \]
whose vertical morphisms are induced by the natural inclusions and whose horizontal morphisms are induced by the codomain functors. We claim that this diagram is also a pushout diagram of simplicial sets. Since the vertical morphisms are monomorphisms, this boils down to the following claim: the codomain functor $I_{i/} \rightarrow I$ induces a bijection between the $k$-simplices of $I_{i/}$ that are not in the image of $J_{i/} \rightarrow I_{i/}$ and the $k$-simplices of $I$ that are not in the image of $J \subset I$. Such $k$-simplices are in both cases given by chains of morphisms which at some point pass through $i$. In fact, since non-identity morphisms lower the degree, these chains must start at the object $i$. For the same reason, such a $k$-simplex in $I_{i/}$ has to start at the object $\id_i \colon i \rightarrow i$. The claim readily follows from these observations.
 
Next we note that there exists an isomorphism $I_{i/} \cong \Delta^0 \ast J_{i/}$ with the property that $J_{i/} \rightarrow I_{i/}$ corresponds to the natural inclusion in the join. Indeed, any $k$-simplex not in the image of $J_{i/} \rightarrow I_{i/}$ is either degenerate on the vertex $\id_i$ or it is obtained from a unique pair of a degenerate $\ell$-simplex on $\id_i$ and an $m$-simplex in $J_{i/}$ with $\ell + m+1=k$, by concatenating the two in the evident manner.
 
 Combining these two facts, we find that an extension of $U$ corresponds to a morphism $\Delta^0 \ast J_{i/} \rightarrow \C$ whose restriction to $J_{i/}$ is the composite
 \[
  \xymatrix{ J_{i/} \ar[r] & J \ar[r]^{U} & \C.}
 \]
 In other words, such an extension corresponds to a cone on this diagram in $\C$. Since $\C$ has finite limits, such a cone  is determined up to equivalence by a morphism $U_i \rightarrow M_i U$. Moreover, the object $(\id_i \colon i \rightarrow i)$ in $I_{i/}$ corresponds to the cone object 
of $\Delta^0 \ast J_{i/}$ and is sent to $i$ by the codomain functor, so any such extension of $U$ sends $i$ to the object $U_i \in \C$.
\end{proof}

We can now proceed with our construction of a semi-simplicial hypercovering of a fixed presheaf in $\Pre(\C)$.
Given $Z \in \Pre(\C)$, we construct a semi-simplicial hypercovering $U^Z_{\bullet} \colon N(\Delta^{op}_{<}) \to \Pre(\C)_{/Z}$ inductively as follows. For $n=0$, we set 
$$U^Z_0 := \bigsqcup_{\alpha \colon j(c) \to Z} j(c)$$
where the coproduct is indexed by a set of morphisms $(\alpha \colon j(c) \to Z)$ in $\Pre(\C)$, one from each homotopy class in $\Pre(\C)$. There is an obvious canonical map $U_0 \to Z$ in $\Pre(\C)$, which is an effective epimorphism by construction. Assuming that the semi-simplicial object $U^{Z}_{\bullet}$ in $\Pre(\C)_{/Z}$ has been constructed for $\bullet < n$, we consider the $n$-th \emph{matching object}
$$M_n U^Z_{\bullet} := \underset{[k] \hookrightarrow [n], k < n}{\mathrm{lim}} U^Z_k \in \Pre(\C)_{/Z}$$
(note that the limit is formed in $\Pre(\C)_{/Z}$) and then define
$$U^Z_n := \bigsqcup_{\alpha \colon j(c) \to M_n U^Z_{\bullet}} j(c)$$
where the coproduct is again indexed by a set of morphisms $(\alpha \colon j(c) \to M_n U_{\bullet}^Z)$ in $\Pre(\C)$, one from each homotopy class of such morphisms. There is a canonical morphism $U^Z_n \to M_n U^Z_{\bullet}$, which is an effective epimorphism by construction. By Lemma~\ref{lemma:inverse_category}, 
this morphism determines essentially uniquely an extension of $U^Z_{\bullet}$ to all $\bullet \leq n$. This completes the construction of the 
semi-simplicial hypercovering $U^Z_{\bullet}$ in $\Pre(\C)_{/Z}$ for any presheaf $Z$.

Specializing Lemma \ref{criterion_flatness} to these choices of semi-simplicial hypercoverings yields the following criterion for flatness. 

\begin{corollary} \label{criterion_flatness2}
Let $\C$, $\X$ and $f \colon \C \to \X$ be as in Lemma \ref{criterion_flatness}. Suppose that for every finite diagram of representable presheaves $$K \xrightarrow{g} \C \to \Pre(\C)$$ with limit $Z(g)$ in $\Pre(\C)$, the associated semi-simplicial object $$L(f)_* U^{Z(g)}_{\bullet} \text{ in } \X_{/X(g)}$$ is a semi-simplicial hypercovering. Then $f$ is a flat functor (i.e., $L(f)$ is left exact). 
\end{corollary}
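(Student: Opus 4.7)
The strategy is to specialize Lemma \ref{criterion_flatness} to the canonical semi-simplicial hypercoverings $U^{Z(g)}_\bullet$ constructed immediately above. The only thing that has to be verified before invoking the lemma is that for each finite diagram $g \colon K \to \C \to \Pre(\C)$, the semi-simplicial object $U^{Z(g)}_\bullet$ really is a semi-simplicial hypercovering of $Z(g)$ in $\Pre(\C)_{/Z(g)}$. This is built into the inductive construction: the augmentation $U^{Z(g)}_0 \to Z(g)$ is an effective epimorphism because it is $\pi_0$-surjective on mapping spaces out of every representable (the coproduct is indexed by representatives for every homotopy class of map $j(c) \to Z(g)$), and by exactly the same reasoning each matching map $U^{Z(g)}_n \to M_n U^{Z(g)}_\bullet$ is an effective epimorphism. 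Lemma \ref{lemma:inverse_category} guarantees that the choice of such a matching map at stage $n$ extends the previously constructed diagram on degrees $<n$ to a coherent semi-simplicial diagram on degrees $\leq n$, so the inductive procedure is well-defined.

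Granting this verification, the hypothesis of the corollary states precisely that $L(f)_\ast U^{Z(g)}_\bullet$ is again a semi-simplicial hypercovering in $\X_{/X(g)}$ for every such $g$. Setting $U^g_\bullet := U^{Z(g)}_\bullet$ in the statement of Lemma \ref{criterion_flatness}, both hypotheses of that lemma are therefore met: we have a semi-simplicial hypercovering of $Z(g)$ in $\Pre(\C)_{/Z(g)}$ whose image under $L(f)_\ast$ is a semi-simplicial hypercovering of $X(g)$ in $\X_{/X(g)}$. Lemma \ref{criterion_flatness} then yields directly that $L(f)$ preserves finite limits of representable presheaves, i.e.\ that $f$ is flat (equivalently, by Theorem \ref{Thm1}, that $L(f)$ is left exact).

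There is essentially no substantive obstacle here; the corollary is purely a repackaging of Lemma \ref{criterion_flatness} with a canonical, functorial-in-$Z$ choice of semi-simplicial hypercovering in place of an arbitrary one. The role of the corollary is to make that canonical choice explicit so that the abstract criterion of Lemma \ref{criterion_flatness} becomes applicable in practice, as will be needed in the proof of Theorem \ref{Thm2}. The only mild point of care is the identification of effective epimorphisms in $\Pre(\C)$ via objectwise $\pi_0$-surjectivity, which is what makes the displayed coproducts of representables into effective epimorphisms over their respective targets.
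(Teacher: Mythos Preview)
Your proposal is correct and matches the paper's approach exactly: the paper treats the corollary as immediate, stating only that it arises by ``specializing Lemma~\ref{criterion_flatness} to these choices of semi-simplicial hypercoverings,'' and your proof simply spells out that specialization together with the observation (already noted during the construction) that each matching map $U^{Z}_n \to M_n U^{Z}_\bullet$ is an effective epimorphism by design. One minor wording point: Lemma~\ref{criterion_flatness} concludes directly that $f$ is flat (i.e., $L(f)$ is left exact), so you need not invoke Theorem~\ref{Thm1} separately at the end.
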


We note that Remark \ref{variations_criterion_flatness} applies also to Corollary \ref{criterion_flatness2}. 

\smallskip

Since the semi-simplicial hypercoverings $U^Z_{\bullet}$ are given degreewise by coproducts of representable presheaves, the criterion of Corollary~\ref{criterion_flatness2} ultimately translates into a condition on the functor $f$. In order to extract a useful description of this condition and compare this with the notion of covering-flat functors, we will need an explicit identification of the matching objects of these semi-simplicial hypercoverings.

\subsection{A decomposition of certain limits in $\Pre(\C)$} It will be convenient to consider the following more general setting. Let $L$ be a finite simplicial set and let $U \colon L \to \Pre(\C)$ be a diagram in $\Pre(\C)$ whose values are coproducts of representable presheaves, i.e., for every $0$-simplex/object $\ell$ in $L$, we have:
\begin{equation} \label{coprod_rep} \tag{$\sqcup$}
U(\ell) \simeq \bigsqcup_{i \in I_{\ell}} j(c_{i}).
\end{equation}
The unit of the adjunction 
$$\mathrm{colim} \colon \Pre(\C) \rightleftarrows \Sp \colon \delta$$ 
yields a morphism between $L$-diagrams in $\Pre(\C)$,
$$U \longrightarrow U^{\delta} := \delta \circ \mathrm{colim} \circ U,$$
and we have an identification
$$U^{\delta}(\ell) \simeq \bigsqcup_{I_{\ell}} \1$$
where $\1$ denotes the terminal object in $\Pre(\C)$, that is, the constant presheaf at the terminal object $\ast \in \Sp$. (This follows from the fact that $\mathrm{colim} \colon \Pre(\C) \rightarrow \Sp$ is the essentially unique functor such that $\colim \circ j$ is constant with value the terminal object of $\Sp$.)

Let $c \colon \Pre(\C) \to \Pre(\C)^L$ denote the constant $L$-diagram functor. A \emph{special point of $U$} is a morphism of $L$-diagrams in $\Pre(\C)$ of the form:
$$\alpha \colon c (\1) \longrightarrow U^{\delta}$$
which is given pointwise by an inclusion of summand, i.e., for any $\ell \in L$, the corresponding component of $\alpha$, 
$$\alpha(\ell) \colon \1 \to U^{\delta}(\ell) \simeq \bigsqcup_{I_{\ell}} \1,$$
agrees with one of the canonical inclusions into the coproduct. Note that $c(\1)$ and $U^{\delta}$ are $0$-truncated objects in $\Pre(\C)^L$ and each special point $\alpha$ of $U$ is a monomorphism. Let $\P(U)$ denote the set of (homotopy classes of) special points of $U$.

\begin{lemma} \label{decomposition_points}
Let $U \colon L \to \Pre(\C)$ be a diagram as above.  For each special point $(\alpha \colon c(\1) \to U^{\delta})$, we define $U_{\alpha} \colon L \to \Pre(\C)$ by the pullback 
$$
\xymatrix{
U_{\alpha} \ar[d] \ar[r] & U \ar[d] \\
c(\1) \ar[r]^{\alpha} & U^{\delta}.
}
$$ 
\begin{itemize}
\item[(a)] The morphism $U_{\alpha} \to U$ is a monomorphism.
\item[(b)] The values of $U_{\alpha}$ are representable presheaves in $\Pre(\C)$ and so the diagram $U_{\alpha}$ factors (essentially uniquely) through the Yoneda embedding $\C \xrightarrow{j} \Pre(\C)$,
$$U_{\alpha} \colon L \xrightarrow{U'_{\alpha}} \C \xrightarrow{j} \Pre(\C).$$
\item[(c)]  For $\alpha \neq \alpha'$ in $\P(U)$, the square in $\Pre(\C)$
$$
\xymatrix{
\mathbf{0} \ar[r] \ar[d] & \mathrm{lim}_L U_{\alpha} \ar[d] \\
\mathrm{lim}_L U_{\alpha'} \ar[r] & \mathrm{lim}_L U
}
$$
is a pullback. (Here $\mathbf{0}$ denotes the initial object in $\Pre(\C)$, that is, the constant presheaf at the initial object $\varnothing \in \Sp$.)
\item[(d)] The canonical morphism 
$$\bigsqcup_{\alpha \in \P(U)} \mathrm{lim}_L U_{\alpha} \longrightarrow \mathrm{lim}_L U$$
is an equivalence in $\Pre(\C)$.
\end{itemize}
\end{lemma}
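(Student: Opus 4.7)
The plan is to treat (a) and (b) via a pointwise analysis, and (c) and (d) by exploiting the $\infty$-topos structure of $\Pre(\C)$: disjointness and universality of coproducts together with the strictness of the initial object $\mathbf{0}$. For (a), each component $\alpha(\ell) \colon \1 \to \bigsqcup_{I_\ell} \1$ is a summand inclusion, hence a monomorphism in $\Pre(\C)$ by the disjointness of coproducts, so $\alpha$ is an objectwise monomorphism of $L$-diagrams, and $U_\alpha \to U$ inherits this property as a pullback of a monomorphism. For (b), computing the defining pullback objectwise and applying universality and disjointness of coproducts in $\Pre(\C)$, the object $U_\alpha(\ell)$ is the summand $j(c_{\alpha(\ell)})$ of $U(\ell)$; the contributions of the other summands vanish because $\mathbf{0}$ is a strict initial object. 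Full faithfulness of the Yoneda embedding (and thus of its post-composition $\mathrm{Fun}(L,\C) \to \mathrm{Fun}(L,\Pre(\C))$) then ensures that the pointwise representable diagram $U_\alpha$ lifts essentially uniquely to a diagram $U'_\alpha \colon L \to \C$.

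For (c), commutation of limits with pullbacks gives
\[
\mathrm{lim}_L U_\alpha \times_{\mathrm{lim}_L U} \mathrm{lim}_L U_{\alpha'} \;\simeq\; \mathrm{lim}_L (U_\alpha \times_U U_{\alpha'}).
\]
Objectwise, by disjointness of coproducts, $(U_\alpha \times_U U_{\alpha'})(\ell)$ equals $j(c_{\alpha(\ell)})$ if $\alpha(\ell) = \alpha'(\ell)$ and equals $\mathbf{0}$ otherwise. Since $\alpha \neq \alpha'$, some index $\ell_0$ yields $\mathbf{0}$, and the canonical projection of $\mathrm{lim}_L (U_\alpha \times_U U_{\alpha'})$ to this value exhibits a morphism into $\mathbf{0}$; strictness of the initial object in $\Pre(\C)$ then forces the limit to be $\mathbf{0}$.

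For (d), the first step is to identify $\mathrm{lim}_L U^\delta$. Because $\delta \colon \Sp \to \Pre(\C)$ is both a left adjoint (to the global sections functor $\mathrm{Map}(\1,-)$) and a right adjoint (to $\colim$), it preserves both limits and coproducts. Since $\colim \circ U$ is the set-valued $L$-diagram $\ell \mapsto I_\ell$ whose limit in $\Sp$ is precisely the set $\P(U)$ of special points, one obtains
\[
\mathrm{lim}_L U^\delta \;\simeq\; \delta(\mathrm{lim}_L I_\bullet) \;\simeq\; \delta(\P(U)) \;\simeq\; \bigsqcup_{\alpha \in \P(U)} \1,
\]
and the summand inclusion $\iota_\alpha \colon \1 \hookrightarrow \mathrm{lim}_L U^\delta$ is naturally identified with $\mathrm{lim}_L \alpha$. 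Applying $\mathrm{lim}_L$ to the defining pullback for $U_\alpha$ then produces a pullback square
\[
\xymatrix{
\mathrm{lim}_L U_\alpha \ar[r] \ar[d] & \mathrm{lim}_L U \ar[d] \\
\1 \ar[r]^-{\iota_\alpha} & \mathrm{lim}_L U^\delta.
}
\]
Taking the coproduct of these squares over $\alpha \in \P(U)$ --- so that the bottom row becomes the identity on $\mathrm{lim}_L U^\delta$ --- the universality of colimits in the $\infty$-topos $\Pre(\C)$ yields the desired equivalence $\bigsqcup_\alpha \mathrm{lim}_L U_\alpha \simeq \mathrm{lim}_L U$. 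The main subtlety I expect is the clean identification of $\mathrm{lim}_L U^\delta$ with $\bigsqcup_{\P(U)} \1$ and the verification that $\iota_\alpha$ corresponds to $\mathrm{lim}_L \alpha$; once this is in place, the remaining ingredients are routine invocations of disjointness of coproducts, strictness of $\mathbf{0}$, and universality of colimits.
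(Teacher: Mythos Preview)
Your arguments for (a), (b), and (c) match the paper's almost verbatim: monomorphisms are stable under pullback, disjointness of coproducts identifies $U_\alpha(\ell)$ as a single representable summand, and the full faithfulness of $j$ (hence of $j_*$ on functor categories) gives the lift $U'_\alpha$; for (c) one computes $\lim_L(U_\alpha \times_U U_{\alpha'})$ and uses that $\mathbf{0}$ is strict.

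For (d), your argument is correct but takes a genuinely different route from the paper. The paper first observes that the canonical map is a monomorphism (using (a) and (c)) and then proves it is an effective epimorphism by an induction on the shape $L$: one defines the class $\R$ of finite simplicial sets for which the statement holds, checks $\Delta^n, \varnothing \in \R$, and shows $\R$ is closed under pushouts along inclusions, whence $\R$ contains all finite simplicial sets. You instead exploit that $\delta \colon \Sp \to \Pre(\C)$ is simultaneously a left adjoint (to global sections) and a right adjoint (to $\colim$), so $\lim_L U^\delta \simeq \delta(\lim_L I_\bullet) \simeq \bigsqcup_{\P(U)} \1$; pulling back $\lim_L U \to \lim_L U^\delta$ along each summand inclusion recovers $\lim_L U_\alpha$, and universality of coproducts gives the decomposition in one stroke. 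Your approach is more conceptual and does not use the finiteness of $L$, while the paper's induction is more hands-on and makes the combinatorics of $\P(U)$ under gluing of $L$ explicit (which is occasionally useful elsewhere). The one point worth spelling out carefully in your write-up is that $\lim_L I_\bullet$ is $0$-truncated (since each $I_\ell$ is) and that its underlying set is indeed $\P(U)$, with $\lim_L \alpha$ corresponding to the summand inclusion $\iota_\alpha$; once this is verified the rest is routine.
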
 
\begin{proof}
(a) holds because $\alpha \colon c(\1) \to U^{\delta}$ is a monomorphism and monomorphisms are closed under pullbacks in $\Pre(\C)$. For (b), first recall that for each $\ell \in L$, the morphism in $\Pre(\C)$
$$\alpha_{\ell} \colon \1 \to \coprod_{I_{\ell}} \1$$
corresponds to the inclusion of a summand for some $i \in I_{\ell}$. Then, $U_{\alpha}(\ell) \simeq j(c_i)$, using that coproducts in $\Pre(\C)$ are disjoint. The second claim of (b) follows directly from the fact that $j \colon \C \to \Pre(\C)$ is full and faithful (\cite[5.1]{HTT}, \cite[6.3]{Ci}).  

For (c), consider the diagram $U_{\alpha} \times_U U_{\alpha'} \colon L \to \Pre(\C)$ and the induced pullback in $\Pre(\C)$
$$
\xymatrix{
\mathrm{lim}_L (U_{\alpha} \times_U U_{\alpha'}) \ar[r] \ar[d] & \mathrm{lim}_L U_{\alpha} \ar[d] \\
\mathrm{lim}_L U_{\alpha'} \ar[r] & \mathrm{lim}_L U.
}
$$
Note that $\alpha \neq \alpha'$ holds if and only if the pullback $U_{\alpha} \times_U U_{\alpha'}$ takes the value $\mathbf{0}$ for some $\ell \in L$. In this case, the limit of $U_{\alpha} \times_U U_{\alpha'}$ is $\mathbf{0}$, because there are no morphisms to $\mathbf{0} \in \Pre(\C)$ except for the identity. 

We now prove (d). Using (a), it follows that the induced morphism 
$$\mathrm{lim}_L U_{\alpha} \to \mathrm{lim}_L U$$
is a monomorphism for each $\alpha \in \P(U)$, and (b) states that these subobjects are pairwise disjoint. As a consequence, the canonical morphism
\begin{equation} \label{comparison_map} \tag{*}
\bigsqcup_{\alpha \in \P(U)} \mathrm{lim}_L U_{\alpha} \longrightarrow \mathrm{lim}_L U
\end{equation}
is again a monomorphism and it remains to prove that it is also an effective epimorphism in $\Pre(\C)$. Let $\R$ denote the collection of finite simplicial sets $L$ for which this holds for any diagram $U$ of the required form \eqref{coprod_rep}. In other words, the morphism \eqref{comparison_map} is an equivalence for $L \in \R$ and any $U$ satisfying \eqref{coprod_rep}. Then $\R$ has the following properties:
\begin{itemize}
\item[(i)]  $\Delta^n \in \R$ for any $n \geq 0$. Note that since $\Delta^n$ has an initial object, this claim is easily reduced to the obvious case of $\Delta^0$.
\item[(ii)] $\R$ contains the empty diagram $L=\varnothing$. In this case, the unique $L$-diagram is vacuously of the form \eqref{coprod_rep} and this diagram has a unique special point $\alpha$, which is an isomorphism, hence the comparison morphism \eqref{comparison_map} is also an isomorphism.  
\item[(iii)] $\R$ is closed under pushouts
$$
\xymatrix{
L_0 \ar[r] \ar@{>->}[d] & L_2 \ar@{>->}[d] \\
L_1 \ar[r] & L
}
$$
where $L_0 \subseteq L_1$ is an inclusion.
In this case, for any $U \colon L \to \Pre(\C)$ of the required form, the special points $\P(U)$ of $U$ fit in a pullback square (of sets)
$$
\xymatrix{
\P(U) \ar[r] \ar[d] & \P(U_{|L_1}) \ar[d] \\
\P(U_{|L_2}) \ar[r] & \P(U_{|L_0})
}
$$
and for each $\alpha = (\alpha_1, \alpha_2; \alpha_0)$ in $\P(U)$, we have a canonical pullback in $\Pre(\C)$
$$
\xymatrix{
\mathrm{lim}_L U_{\alpha} \ar[r] \ar[d] & \mathrm{lim}_{L_1} U_{\alpha_1} \ar[d] \\
\mathrm{lim}_{L_2} U_{\alpha_2} \ar[r] & \mathrm{lim}_{L_0} U_{\alpha_0}. 
}
$$ 
Therefore, the morphism \eqref{comparison_map} for $U$ is identified with the pullback of the corresponding morphisms for $U_{|L_i}$. (It might be helpful to consider first the case where $L_0 = \varnothing$ and $L=L_1 \sqcup L_2$. In this case, the set of special points $\P(U)$ is the product of the special points of $U_{|L_i}$, $i=1,2$, and the morphism \eqref{comparison_map} is simply the product of the corresponding morphisms associated to $U_{|L_1}$ and $U_{|L_2}$.)
\end{itemize}
It follows from these properties that $\R$ contains all finite simplicial sets as required (from (ii) and (iii) it follows that $\R$ is closed under finite coproducts; in particular, $\R$ contains $\partial \Delta^1=\Delta^0 \sqcup \Delta^0 $; then we can obtain every finite simplicial set from these closure properties by induction on the dimension using (i) and (iii)). This completes the proof of (d).
\end{proof}

\subsection{Proof of Theorem \ref{Thm2}}
The ``only if'' direction was shown in Proposition \ref{flat_vs_cov-flat}. For the converse, we will apply the criterion of Corollary \ref{criterion_flatness2} using the semi-simplicial hypercoverings in $\Pre(\C)$ constructed in Subsection \ref{criteria_flatness}.

\smallskip

\noindent Let $f \colon \C \to \X$ be a covering-flat functor. Let $K$ be a finite simplicial set and let $$K \xrightarrow{g} \C \xrightarrow{j} \Pre(\C)$$
be a diagram of representable presheaves. We recall the notation $Z(g)$ for the limit of this diagram in $\Pre(\C)$ and $X(g)$ for the limit in $\X$ of the associated diagram 
$$K \xrightarrow{g} \C \xrightarrow{f} \X.$$ 
Let $U^{Z(g)}_{\bullet}$ denote the semi-simplicial hypercovering in $\Pre(\C)_{/Z(g)}$ constructed in Subsection \ref{criteria_flatness} and let 
$$V_{\bullet} := L(f)_* U^{Z(g)}_{\bullet} \colon N(\Delta^{op}_<) \to \X_{/X(g)}$$
be the associated semi-simplicial object in $\X_{/X(g)}$. By Corollary \ref{criterion_flatness2}, it suffices to show that $V_{\bullet}$ is a semi-simplicial hypercovering, that is, it suffices to show that the canonical morphism in $\X_{/X(g)}$ $$V_n \to M_n(V_{\bullet})$$ is an effective epimorphism for any $n \geq 0$. Note that $M_{0}V_{\bullet}$ is a terminal object in $\X_{/X(g)}$.

We will use the decomposition shown in Lemma \ref{decomposition_points} in order to express the $n$-th matching objects of $U^{Z(g)}_{\bullet}$ and $V_{\bullet}$ in a convenient form. 
Let $L_n$ denote the (nerve of the) opposite subcategory of $(\Delta_<)_{/[n]}$ spanned by $([k] \hookrightarrow [n])$ with $k \neq n$. By definition, the $n$-th matching object $M_n (U^{Z(g)}_{\bullet})$ is the limit of the finite diagram
$$U[n] \colon L_n \to \Pre(\C)_{/Z(g)}$$
which sends $([k] \hookrightarrow [n] )$ to $U^{Z(g)}_k$. Since $Z(g)$ is the limit of $jg \colon K \to \Pre(\C)$, we have an equivalence
$$\Pre(\C)_{/Z(g)} \simeq \Pre(\C)_{/jg},$$
so the limit of $U[n]$ is identified with the limit of the adjoint diagram in $\Pre(\C)$, denoted by
$$U[n, g] \colon L_n \ast K \to \Pre(\C).$$
Note that the values of $U[n,g]$ are coproducts of representable presheaves. Therefore, by Lemma \ref{decomposition_points}, we have a canonical equivalence:
$$\bigsqcup_{\alpha \in \P(U[n,g])} \lim_{L_n \ast K} U[n,g]_{\alpha} \xrightarrow{\sim} \lim_{L_n \ast K} U[n,g] \simeq M_n (U^{Z(g)}_{\bullet})$$
where each diagram 
$$U[n,g]_{\alpha} \colon L_n \ast K \xrightarrow{U'[n,g]_{\alpha}} \C \xrightarrow{j} \Pre(\C)$$
is a finite diagram of representable presheaves. Similarly, the $n$-th matching object $M_n (V_{\bullet})$ is the limit of the finite diagram in $\X_{/X(g)}$,
$$V[n] \colon L_n \xrightarrow{U[n]} \Pre(\C)_{/Z(g)} \xrightarrow{L(f)_{/c_g}} \X_{/X(g)}, \ \ ([k] \hookrightarrow [n]) \mapsto V_k,$$
which can be identified with the limit of the adjoint diagram in $\X$, denoted by
$$V[n, g] \colon L_n \ast K \xrightarrow{U[n,g]} \Pre(\C) \xrightarrow{L(f)} \X.$$

\begin{claim} \label{decomposition_matching}
The canonical morphism 
$$\bigsqcup_{\alpha \in \P(U[n,g])} \lim_{L_n \ast K} (L(f) \circ U[n,g]_{\alpha}) \xrightarrow{\sim} \lim_{L_n \ast K} V[n,g] \simeq M_n (V_{\bullet})$$
is an equivalence in $\X$.
\end{claim}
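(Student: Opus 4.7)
The plan is to apply (the formal parts of) Lemma~\ref{decomposition_points} directly in the $\infty$-topos $\X$ to the diagram $W := V[n,g] \colon L_n \ast K \to \X$, rather than trying to push the corresponding decomposition in $\Pre(\C)$ through the functor $L(f)$ (which would be circular, since $L(f)$ is not yet known to preserve pullbacks).

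First I would observe that since $L(f)$ preserves coproducts (being a left adjoint), the values of $W$ are again coproducts in $\X$ with the same combinatorial structure as those of $U[n,g]$:
$$W(\ell) \simeq \bigsqcup_{i \in I_\ell} f(c_i), \qquad U[n,g](\ell) \simeq \bigsqcup_{i \in I_\ell} j(c_i).$$
By disjointness of coproducts in both $\Pre(\C)$ and $\X$, each transition map $W(\ell) \to W(\ell')$ is determined by a function $\phi \colon I_\ell \to I_{\ell'}$ together with component maps $f(c_i) \to f(c_{\phi(i)})$, obtained by applying $L(f)$ to the analogous combinatorial data for $U[n,g]$.

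Next I would note that parts (a), (c), and (d) of Lemma~\ref{decomposition_points} hold verbatim in any $\infty$-topos $\X'$ for any diagram whose values are coproducts, provided one defines $W^\delta(\ell) := \bigsqcup_{I_\ell} \1_{\X'}$ directly, with $W \to W^\delta$ induced by the unique maps to $\1_{\X'}$ from each summand. The proofs rely only on formal $\infty$-topos properties: stability of monomorphisms under pullback, strictness of the initial object, disjointness of coproducts, and universality of colimits. Part (b), the factorization through the Yoneda embedding, is the only piece specific to $\Pre(\C)$, and it is not used in the proof of (d). Applying this $\X$-version to $W = V[n,g]$ gives a canonical equivalence in $\X$:
$$\bigsqcup_{\beta \in \P(V[n,g])} \lim_{L_n \ast K} V[n,g]_\beta \xrightarrow{\sim} \lim_{L_n \ast K} V[n,g] = M_n(V_\bullet),$$
with $V[n,g]_\beta$ defined by the analogous pullback in $\X$.

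It then remains to identify $\P(V[n,g])$ with $\P(U[n,g])$ and $V[n,g]_\beta$ with $L(f) \circ U[n,g]_\beta$. The set of special points depends only on the index sets $\{I_\ell\}$ and the induced functions $I_\ell \to I_{\ell'}$, which by the first step coincide on both sides, giving the bijection. For each $\beta$ the pullback defining $V[n,g]_\beta$ in $\X$ picks out, at each $\ell$, the summand $f(c_{i_\ell^*})$ of $W(\ell)$ (by disjointness of coproducts in $\X$), while $U[n,g]_\beta(\ell) = j(c_{i_\ell^*})$ and hence $L(f)(U[n,g]_\beta(\ell)) = f(c_{i_\ell^*})$; the transition maps agree because $L(f)$ preserves the coproduct decomposition, hence commutes with the ``restrict to the chosen summand'' operation. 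The main obstacle I anticipate is precisely this last bookkeeping, i.e., verifying the equivalence $V[n,g]_\beta \simeq L(f) \circ U[n,g]_\beta$ as diagrams and not merely pointwise. The essential point is that this identification is forced by preservation of coproducts together with their disjointness in both $\Pre(\C)$ and $\X$, so we never need $L(f)$ to preserve a pullback, which is what Theorem~\ref{Thm2} is ultimately trying to prove.
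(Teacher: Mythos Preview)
Your approach is correct and genuinely different from the paper's. You apply the decomposition of Lemma~\ref{decomposition_points} directly in $\X$, after observing that its parts (a), (c), (d) go through in any $\infty$-topos once $W^{\delta}$ is defined by hand as $\bigsqcup_{I_\ell} \1_\X$; the coherence worry you flag is harmless because $W^{\delta}$ is valued in $0$-truncated objects, so naturality of $W \to W^{\delta}$ and the identification $V[n,g]_\beta \simeq L(f)\circ U[n,g]_\beta$ reduce to set-level checks on summand indices. The paper instead avoids reproving anything about Lemma~\ref{decomposition_points}: it chooses a regular cardinal $\kappa$ so that $f$ factors through $\X^{\kappa}$ and $\X$ is a left exact localization $L \colon \Pre(\X^{\kappa}) \to \X$, then factors $L(f)$ as $\Pre(\C) \xrightarrow{F} \Pre(\X^{\kappa}) \xrightarrow{L} \X$. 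Since $F$ preserves coproducts and sends representables to representables, Lemma~\ref{decomposition_points} applies verbatim to $W[n,g] = F \circ U[n,g]$ in the presheaf category $\Pre(\X^{\kappa})$; the bijection on special points and the identification $F \circ U[n,g]_\alpha \simeq W[n,g]_\beta$ are then handled via the adjunction $(F, f^*)$, and the left exact $L$ transports the resulting equivalence to $\X$. Your route is more elementary and exposes that the decomposition is a general $\infty$-topos fact; the paper's route reuses Lemma~\ref{decomposition_points} as stated and packages the bookkeeping into a single adjunction argument, at the cost of the auxiliary factorization through $\Pre(\X^{\kappa})$.
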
 

\smallskip

\noindent \emph{Proof of Claim \ref{decomposition_matching}}. 
We may choose a regular cardinal $\kappa$ so that the full subcategory $\X^{\kappa} \subset \X$ spanned by the $\kappa$-presentable (or $\kappa$-compact)  objects in $\X$ has the following properties:
\begin{itemize}
\item[(i)] the inclusion $\X^{\kappa} \hookrightarrow \X$ extends to a left exact localization functor $$L \colon \Pre(\X^{\kappa}) \to \X;$$
\item[(ii)] the functor $f \colon \C \to \X$ factors through $\X^{\kappa} \subset \X$. 
\end{itemize}
Then the functor $L(f)$ factors (up to equivalence) as a composition:
$$\Pre(\C) \xrightarrow{F} \Pre(\X^{\kappa}) \xrightarrow{L} \X$$
where $F$ is induced by $f \colon \C \to \X^{\kappa} \subset \X$.  Since $L$ preserves coproducts and finite limits, it suffices to verify the equivalence of the claim in $\Pre(\X^{\kappa})$, that is, it suffices to show that the canonical morphism
\begin{equation} \label{decomposition_general} \tag{**}
\bigsqcup_{\alpha \in \P(U[n,g])} \lim_{L_n \ast K} (F \circ U[n,g]_{\alpha}) \xrightarrow{\sim} \lim_{L_n \ast K} (F \circ U[n, g]) \simeq M_n (F \circ U_{\bullet})
\end{equation}
is an equivalence in $\Pre(\X^{\kappa})$. (Here $F \circ U_{\bullet}$ can equivalently be regarded as a semi-simplicial object in $\Pre(\X^{\kappa})_{/Fjg}$.)
Note that the values of $$W[n,g] := F \circ U[n,g]$$ are coproducts of representable presheaves, because the values of $U[n,g]$ in $\Pre(\C)$ are coproducts of representable presheaves and $F$ preserves small coproducts and representable presheaves. Then the equivalence \eqref{decomposition_general} will follow directly from Lemma \ref{decomposition_points} (applied to $W[n,g]$) as soon as we establish a bijective correspondence between the special points of $U[n,g]$ and of $W[n,g]$ and identify the corresponding subobjects via $F$. 

A subobject $U[n,g]_{\alpha}$ of $U[n,g]$ is given pointwise by a choice of a representable summand, so it determines a corresponding subobject $W[n,g]_{\beta} := F \circ U[n,g]_{\alpha}$ of $W[n,g]$ with the same property, therefore, a special point $\beta$ of $W[n,g]$. This correspondence $\alpha \mapsto \beta$ defines a function $\P(U[n,g]) \to \P(W[n,g])$ and it is immediate that the function is injective. Conversely, given a special point $(\beta \colon c(\1) \to W[n,g]^{\delta})$ of $W[n,g]$, we apply the right adjoint of $F$, denoted by $f^* \colon \Pre(\X^{\kappa}) \to \Pre(\C)$, and obtain a morphism in $\Pre(\C)$
$$f^*\beta \colon c(\1) \simeq f^*c(\1) \to f^*W[n,g]^{\delta} \simeq U[n,g]^{\delta}$$
where the last equivalence can be seen by direct inspection and the definition of the functors involved. Moreover, it is easy to verify that $\alpha : = f^*\beta$ is a special point of $U[n,g]$ and therefore there is an associated subobject $U[n,g]_{\alpha}$ whose values are representable presheaves. Using the commutative square of diagrams in $\Pre(\C)$, 
$$
\xymatrix{
U[n,g]_{\alpha} \ar[r] \ar[d] & f^*W[n,g] \ar[d] \\
f^*c(\1) \ar[r]^(.45){f^*\beta} & f^* W[n,g]^{\delta}
}
$$
where the top arrow is defined using the unit of the adjunction, we obtain (by adjunction) a morphism $F \circ U[n,g]_{\alpha} \to W[n,g]_{\beta}$ between diagrams in $\Pre(\X^{\kappa})$ over the diagram $W[n,g]$. Both $W[n,g]_{\beta}$ and $F \circ U[n,g]_{\alpha}$ are subobjects of $W[n,g]$, each of which corresponds pointwise to a choice of a representable summand, therefore they must be equivalent. This completes the proof of the claim.  
\qed 

\medskip

\noindent We may now continue with the proof of Theorem \ref{Thm2}. For any $\alpha \in \P(U[n,g])$, we denote the corresponding summand of the decomposition of $M_n(V_{\bullet})$ by 
$$y_{\alpha} = \mathrm{lim}(V[n,g]_{\alpha} \colon L_n \ast K \xrightarrow{U'[n,g]_{\alpha}} \C \xrightarrow{f} \X).$$
Moreover, using the decomposition of Claim \ref{decomposition_matching} and universality of coproducts, we may identify $V_n$ with a coproduct of objects in $\X$,
$$V_n \simeq \bigsqcup_{\alpha} V_{n, \alpha}$$
so that the morphism $V_n \to M_n(V_{\bullet})$ is identified with the coproduct of morphisms $V_{n,\alpha} \to y_{\alpha}$. Then it suffices to show that each of these morphisms (for every $\alpha$) is an effective epimorphism since coproducts of effective epimorphisms are effective epimorphisms (see \cite[6.2.3.11]{HTT}).

Since $f \colon \C \to \X$ is covering-flat, there is a covering familiy $\{y'_i \to y_a\}_{i \in I_{\alpha}}$ with the property that for each $(y'_i \to y_{\alpha})$, there is a cone $(L_n \ast K)^{\triangleleft} \to \C$ on $U'[n,g]_{\alpha}$ with cone object $c_i \in \C$ and a factorization (of cones on $V[n,g]_{\alpha}$):
$$y'_i \to f(c_i) \to \lim_{L_n \ast K} V[n,g]_{\alpha} =y_{\alpha}.$$
Note that this composite admits a further factorization:
$$y'_i \to f(c_i) \to L(f)\big(\lim_{L_n \ast K} U[n,g]_{\alpha}\big) \xrightarrow{c_{U[n,g]_{\alpha}}} \lim_{L_n \ast K} V[n,g]_{\alpha} =y_{\alpha}.$$
The latter factorization shows that each composite morphism $(f(c_i) \to y_{\alpha})$ factors (up to homotopy) through the morphism $V_{n, \alpha} \to y_{\alpha}$; this observation uses the definition of $U^{Z(g)}_{\bullet}$. 

Since the canonical morphism $\bigsqcup_{i \in I_{\alpha}} y'_i \to y_{\alpha}$ is an effective epimorphism, the associated morphism 
$$\bigsqcup_{i \in I_{\alpha}} f(c_i) \to y_{\alpha}$$
is also an effective epimorphism \cite[6.2.3.12]{HTT}. Then it follows similarly that the morphism $V_{n, \alpha} \to y_{\alpha}$ is an effective epimorphism as required. This completes the proof of Theorem \ref{Thm2}. \qed

\begin{example}
By Theorem \ref{Thm2},  the colimit functor $\mathrm{colim} \colon \Pre(\C) \to \Sp$ is left exact if and only if the constant functor $\C \to \Sp$ at the terminal object $\ast \in \Sp$ is covering-flat. It is easy to see that the condition of being covering-flat in this case corresponds exactly to the condition that $\C^{\op}$ is a filtered $\infty$-category. Indeed, by Lemma~\ref{lem_cov-flat}, we may reduce to the case where $h^{\triangleleft}$ is a limit cone, which in this case must be constant at the terminal object. The condition of being covering-flat thus boils down to the fact that for any finite $K$ and any $g \colon K \rightarrow \C$, there exists a cone $K^{\triangleleft} \to \C$ on $g$.
\end{example}

\begin{example}\label{example_category_of_elements_filtered}
 Let $\C$ be a small $\infty$-category. Using Theorem~\ref{Thm2}, we can show that a functor $f \colon \C \rightarrow \Sp$ has a left exact left Kan extension $\Pre(\C) \rightarrow \Sp$ along the Yoneda embedding if and only if its category of elements, that is, the slice $\infty$-category $f_{\ast \slash}$, is cofiltered.
 
  First assume that $f$ is covering-flat and let $K$ be a finite simplicial set. A $K$-diagram in $f_{\ast \slash}$ corresponds to a diagram $g \colon K \rightarrow \C$ and a cone $h^{\triangleleft}$ on $h=fg$ with cone object $\ast \in \Sp$. A collection of morphisms $\{y^{\prime} \rightarrow \ast\}$ defines a covering family if and only if one of the objects $y^{\prime}$ is non-empty. Since $f$ is covering-flat, the sieve $S_{g,h^{\triangleleft}}$ contains a covering family, so it must also contain $\id_{\ast} \colon \ast \rightarrow \ast$. But this implies that the cone $h^{\triangleleft}$ factors through the image of some cone $g^{\triangleleft} \colon K^{\triangleleft} \rightarrow \C$. The resulting morphism $\ast \rightarrow f(c)$, where $c$ denotes the cone object of $g^{\triangleleft}$, yields an object in $f_{\ast \slash}$ which is the cone object of a cone in $f_{\ast \slash}$ on the original $K$-diagram. Thus $f_{\ast \slash}$ is cofiltered, as claimed.
 
 Conversely, assume that $f_{\ast \slash}$ is cofiltered. Let $K$ be a finite simplicial set, $g \colon K \rightarrow \C$ a diagram, and let $h^{\triangleleft} \colon K^{\triangleleft} \rightarrow \Sp$ be a cone on $h=fg$ with cone object $y$. There is a covering family $\{\ast \xrightarrow{u_i} y\}_{i \in I}$ 
 in $\Sp$, so it suffices to check that any cone $h^{\triangleleft}$ on $h=fg$ with cone object $\ast$ factors through a cone in the image of $f$. Such a cone $h^{\triangleleft}$ is precisely a $K$-diagram in $f_{\ast \slash}$, while such a factorization corresponds to a cone on this diagram in $f_{\ast \slash}$. Since $f_{\ast \slash}$ is cofiltered, the desired factorization of $h^{\triangleleft}$ does indeed exist, so $f$ is covering-flat.
 
This characterization of flat functors $f \colon \C \to \Sp$ can be useful more generally for functors $\C \to \X$ where $\X$ is an $\infty$-topos with enough points $\{x_a^* \colon \X \to \Sp\}_{a \in A}$. In this case, a functor $f \colon \C \to \X$ is flat if and only if the composite functor $\C \xrightarrow{f} \X \xrightarrow{x_a^*} \Sp$ is flat for every $a \in A$.
\end{example}

\section{Generalizations to $n$-topoi} \label{sec:n-topoi}

\subsection{From $\infty$-topoi to $n$-topoi} The question about the left exactness of $L(f)$ can be considered more generally in the context of $n$-topoi. Let $\C$ be a small $n$-category, where $1 \leq n \leq \infty$. The associated $n$-topos of $(n\text{-}1)$-truncated objects in $\Pre(\C)$ will be denoted by $\Pre(\C)_n$ (see \cite[6.4]{HTT}). More explicitly, this is the $\infty$-category of presheaves on $\C$ with values in the $\infty$-category of $(n\text{-}1)$-truncated spaces. We denote the associated localization/truncation functor by 
$$\tau_n \colon \Pre(\C) \to \Pre(\C)_n;$$
this is the left adjoint of the inclusion functor $i_n \colon \Pre(\C)_n \hookrightarrow \Pre(\C)$ (see \cite[5.5.6]{HTT} for details about truncated objects). Since $\C$ is an $n$-category by assumption, the Yoneda embedding $j \colon \C \to \Pre(\C)$ factors through the inclusion of the full subcategory $\Pre(\C)_n \subset \Pre(\C)$ (see \cite[2.3.4]{HTT} for details about $n$-categories). We denote the associated Yoneda embedding by $j_n \colon \C \to \Pre(\C)_n$. 

\smallskip

Let $\Y$ be a cocomplete $\infty$-category and let $f \colon \C \to \Y$ be a functor. We write $$L(f)_n \colon \Pre(\C)_n \to \Y$$ for the restriction of $L(f)$ to $\Pre(\C)_n$ along the inclusion functor $i_n$. The functor $L(f)_n$ is the left Kan extension of $f$ along the Yoneda embedding $j_n \colon \C \to \Pre(\C)_n$ and can also be identified with the left Kan extension of $L(f)$ along the truncation functor $\tau_{n} \colon \Pre(\C) \to \Pre(\C)_n$. 
$$
\xymatrix{
& \C \ar[rrd]^f \ar[dl]_{j_n} \ar[d]^{j} && \\
\Pre(\C)_n \ar@{^{(}->}[r]_{i_n} \ar@{=}[rd] & \Pre(\C) \ar[d]_{\tau_n} \ar@{-->}[rr]^{L(f)}   \ar@{}[drr]^(0.45){\Downarrow} && \Y. \\
& \Pre(\C)_n \ar@{-->}[rru]_{L(f)_n} &&
}
$$
(We will keep the notation $L(f)$ for $L(f)_{\infty}$ and $\Pre(\C)$ for $\Pre(\C)_n$ when $n=\infty$.) 

\begin{lemma} \label{lem_adjunction}
Let $\C$ be a small $n$-category and let $\Y$ be a cocomplete $n$-category. For a functor $f \colon \C \to \Y$, the associated functor $L(f)_n \colon \Pre(\C)_n \to \Y$ is a left adjoint. In particular, $L(f)_n$ preserves small colimits.
\end{lemma}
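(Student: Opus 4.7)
My plan is to reduce the lemma to the analogous statement for the functor $L(f) \colon \Pre(\C) \to \Y$ already established at the beginning of Section~\ref{sec:infinity-topoi}. Since $\Y$ is cocomplete, $L(f)$ exists and admits a right adjoint $R \colon \Y \to \Pre(\C)$, by the references cited there. The adjunction together with the Yoneda lemma yields the standard pointwise formula
\[
R(Y)(c) \simeq \map_{\Pre(\C)}(j(c), R(Y)) \simeq \map_{\Y}(L(f)(j(c)), Y) \simeq \map_{\Y}(f(c), Y),
\]
valid for every $Y \in \Y$ and $c \in \C$; this formula is the hinge of the argument.

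The key observation is that $\Y$ being an $n$-category (in the sense of \cite[2.3.4]{HTT}) means all of its mapping spaces are $(n\text{-}1)$-truncated, so $R(Y)(c)$ is $(n\text{-}1)$-truncated for every $c \in \C$. Because truncation in a presheaf $\infty$-category is computed pointwise, $R(Y)$ is therefore $(n\text{-}1)$-truncated in $\Pre(\C)$, i.e., $R(Y) \in \Pre(\C)_n$. Hence $R$ factors essentially uniquely as $R \simeq i_n \circ R_n$ for some functor $R_n \colon \Y \to \Pre(\C)_n$.

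To conclude, I would verify that $R_n$ is right adjoint to $L(f)_n$. Using the definition $L(f)_n = L(f) \circ i_n$ and the full faithfulness of $i_n$, for $Z \in \Pre(\C)_n$ and $Y \in \Y$ one computes
\[
\map_{\Y}(L(f)_n(Z), Y) \simeq \map_{\Pre(\C)}(i_n Z, R(Y)) \simeq \map_{\Pre(\C)}(i_n Z, i_n R_n(Y)) \simeq \map_{\Pre(\C)_n}(Z, R_n(Y)),
\]
naturally in both variables. This exhibits $L(f)_n \dashv R_n$, so $L(f)_n$ is a left adjoint, and in particular preserves all small colimits. The only substantive point is the identification of $(n\text{-}1)$-truncated objects in $\Pre(\C)$ with presheaves of $(n\text{-}1)$-truncated spaces (pointwise truncation in functor $\infty$-categories); this is the main but routine obstacle, and the rest of the argument is purely formal adjointness.
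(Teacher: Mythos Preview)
Your proposal is correct and follows essentially the same approach as the paper: both proofs use the Yoneda lemma together with the adjunction $L(f) \dashv G$ to compute $G(y)(c) \simeq \map_{\Y}(f(c),y)$, observe that these values are $(n\text{-}1)$-truncated because $\Y$ is an $n$-category, and conclude that $G$ factors through $\Pre(\C)_n$ to give the desired right adjoint of $L(f)_n$. Your version spells out the final adjunction verification in slightly more detail, but the argument is the same.
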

\begin{proof}
Let $G \colon \Y \to \Pre(\C)$ denote the right adjoint of $L(f)$. Using this adjunction and the Yoneda lemma \cite[5.1.3, 5.2.6.5]{HTT}, \cite[5.8.9]{Ci}, for any $y \in \Y$ and $c \in \C$, we have canonical equivalences:
$$G(y)(c) \simeq \map_{\Pre(\C)}(j(c), G(y)) \simeq \map_{\Y}(f(c), y).$$
Therefore the values of $G(y)$ are $(n\text{-}1)$-truncated for every $y \in \Y$. This implies that $G$ factors through $i_n \colon \Pre(\C)_n \subset \Pre(\C)$ -- this factorization is also a consequence of the fact that $G$ is left exact, see \cite[5.5.6.16]{HTT}. It follows that $G \colon \Y \to \Pre(\C)_n$ is a right adjoint of $L(f)_n$. 
\end{proof}

\begin{remark}
Lemma \ref{lem_adjunction} fails in general when $\Y$ is not an $n$-category. For example, in the case where $f$ is the Yoneda embedding $j \colon \C \to \Pre(\C)$, then $L(j) \colon \Pre(\C) \to \Pre(\C)$ is the identity functor and $L(j)_n \colon \Pre(\C)_n \to \Pre(\C)$ is  simply the inclusion functor. This inclusion functor does not admit a right adjoint for any $n < \infty$ (unless $\C$ is the empty category). 
\end{remark}

 From the above lemma it follows that $\Pre(\C)_n$ is the free cocomplete $n$-category on $\C$. For a cocomplete $n$-category $\Y$, we denote by $\mathrm{Fun}^{L}(\Pre(\C)_n,\Y)$ the full subcategory of left adjoint functors. 
 
 \begin{proposition}\label{prop:free_cocomplete_n_category}
Let $\C$ be a small $n$-category and let $\Y$ be a cocomplete $n$-category. Then the functor
 \[
 \mathrm{Fun}^{L}(\Pre(\C)_n,\Y) \rightarrow \mathrm{Fun}(\C,\Y)
 \]
 given by restriction along the Yoneda embedding $j_n \colon \C \rightarrow \Pre(\C)_n$ is an equivalence. Its inverse sends $f$ to $L(f)_n$.
 \end{proposition}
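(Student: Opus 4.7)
The plan is to bootstrap from the universal property of $\Pre(\C)$ in the $\infty$-categorical setting, namely the equivalence \eqref{Yoneda_emb}, by exploiting the reflective subcategory structure $\tau_n \dashv i_n$ realizing $\Pre(\C)_n$ as a localization of $\Pre(\C)$. Two identities I will use repeatedly are the factorization $j = i_n \circ j_n$ and the equivalence $\tau_n \circ i_n \simeq \id_{\Pre(\C)_n}$, the latter holding because $i_n$ is fully faithful as the inclusion of a reflective subcategory.

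First I would observe that $L(-)_n$ assembles into a functor $\Fun(\C, \Y) \to \Fun^{L}(\Pre(\C)_n, \Y)$: by \eqref{Yoneda_emb}, the assignment $f \mapsto L(f)$ is already a functor into $\Fun^L(\Pre(\C), \Y)$, and precomposing with $i_n$ yields $L(f)_n$, which is a left adjoint thanks to Lemma~\ref{lem_adjunction}.

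To show that $L(-)_n$ is a quasi-inverse of $j_n^*$, I would verify the two round-trip equivalences. One direction is essentially tautological: $j_n^* L(f)_n = L(f) \circ i_n \circ j_n = L(f) \circ j \simeq f$, using $j = i_n \circ j_n$ and the defining property of $L(f)$ from \eqref{Yoneda_emb}. For the other direction, given a left adjoint $F \colon \Pre(\C)_n \to \Y$, I would introduce the composite $F \circ \tau_n \colon \Pre(\C) \to \Y$, which is itself a left adjoint (its right adjoint being $i_n$ composed with the right adjoint of $F$). Restricting this composite along $j$ yields $F \circ \tau_n \circ i_n \circ j_n \simeq F \circ j_n$, so the uniqueness part of \eqref{Yoneda_emb} forces $F \circ \tau_n \simeq L(F \circ j_n)$. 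Precomposing both sides with $i_n$ and invoking $\tau_n \circ i_n \simeq \id$, we conclude $F \simeq L(F \circ j_n)_n$, as required.

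The only real subtlety is ensuring that these round-trip equivalences are natural at the level of functor $\infty$-categories, rather than merely on isomorphism classes of objects. This is automatic, however, because \eqref{Yoneda_emb} is already an equivalence of $\infty$-categories, so all identifications above take place in the relevant functor $\infty$-categories and assemble into a coherent inverse to $j_n^*$. I do not anticipate a serious obstacle: the bulk of the work has been absorbed into Lemma~\ref{lem_adjunction} (which guarantees the target of $L(-)_n$ is the correct subcategory) and into the $\infty$-categorical universal property of $\Pre(\C)$.
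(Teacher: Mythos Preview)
Your proposal is correct and follows essentially the same strategy as the paper: reduce to the known universal property \eqref{Yoneda_emb} of $\Pre(\C)$ and invoke Lemma~\ref{lem_adjunction} to ensure that $L(f)_n$ lands in $\Fun^L(\Pre(\C)_n,\Y)$. The only difference is in execution: the paper shows that $i_n^{\ast}\colon \Fun^L(\Pre(\C),\Y)\to \Fun^L(\Pre(\C)_n,\Y)$ is an equivalence by passing to right adjoints (which factor through $\Pre(\C)_n$ since they preserve $(n\text{-}1)$-truncated objects), whereas you verify the two round-trips directly using the left adjoint $\tau_n$ and the identity $\tau_n i_n\simeq \id$. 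Both arguments amount to the same observation that the adjunction $\tau_n\dashv i_n$ induces an equivalence at the level of left adjoint functors into an $n$-category; your version is perhaps slightly more elementary in that it avoids the detour through $\Fun^R$.
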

\begin{proof}
 We know from \cite[5.1.5.6]{HTT} that restriction along $j \colon \C \rightarrow \Pre(\C)$ induces an equivalence $\mathrm{Fun}^{L}(\Pre(\C),\Y) \rightarrow \mathrm{Fun}(\C,\Y)$ whose inverse sends $f$ to $L(f)$. Moreover, passing to the right adjoint yields an equivalence
 \[
 \mathrm{Fun}^{L}(\Pre(\C),\Y) \rightarrow \mathrm{Fun}^{R}(\Y,\Pre(\C))^{\op},
 \]
 where we write $\Fun^{R}$ for the full subcategory of right adjoint functors. Since right adjoint functors preserve $(n\text{-}1)$-truncated objects, each such right adjoint factors through the (right adjoint) inclusion $i_n \colon \Pre(\C)_n \rightarrow \Pre(\C)$. From this it follows that composition with $i_n$ gives equivalences of $\infty$-categories:
 \[
 \mathrm{Fun}^{R}(\Y,\Pre(\C)_n) \rightarrow \mathrm{Fun}^R(\Y,\Pre(\C)) \quad \text{and} \quad \mathrm{Fun}^L(\Pre(\C),\Y) \rightarrow \mathrm{Fun}^{L}(\Pre(\C)_n,\Y)
 \]
Thus we find that the composite
 \[
 \xymatrix{\mathrm{Fun}(\C,\Y) \ar[r]^-{L(-)} & \mathrm{Fun}^{L}(\Pre(\C),\Y) \ar[r]^-{i_n^{\ast}} & \mathrm{Fun}^L(\Pre(\C)_n,\Y)}
 \]
 is an equivalence. Since $L(f) i_n j_n \simeq L(f) j \simeq f$, we conclude that its inverse is given by restriction along the Yoneda embedding $j_n \colon \C \rightarrow \Pre(\C)_n$.
\end{proof}

\begin{remark}
The $\infty$-category $\Fun^L(\Pre(\C)_n, \Y)$ agrees with the $\infty$-category of functors which preserve small colimits (cf. \cite[4.1.4]{NRS}). This follows from the fact that a colimit-preserving functor $F\colon \Pre(\C)_n \to \Y$ can be identified with the left adjoint functor $L(Fj_n)_n$. To see this, note that there is a canonical natural transformation $L(Fj_n)_n \to F$ between colimit-preserving functors, both of which are extensions of $Fj_n \colon \C \to \Y$, and each object in $\Pre(\C)_n$ is canonically a small colimit of representable presheaves.  
\end{remark}

The following proposition shows that the question about the left exactness of $L(f)_n$ in the context of $n$-topoi ($1 \leq n \leq \infty$) can be reduced to the case $n=\infty$. 

\begin{proposition} \label{prop_comparison}
Let $\C$ be a small $n$-category, where $1 \leq n < \infty$, and let $\X$ be an $\infty$-topos. Let $\X_{n}$ denote the $n$-topos of $\text{(n--1)}$-truncated objects in $\X$ and let $\tau^{\X}_n \colon \X \to \X_n$ denote the localization functor which is left adjoint to the inclusion functor $i^{\X}_{n} \colon \X_n \subset \X$. 

For a functor $f \colon \C \to \X_n$, the induced functor $L(f)_n \colon \Pre(\C)_n \to \X_n$ of $n$-topoi is left exact if the associated functor $L(i^{\X}_{n} f) \colon \Pre(\C) \to \X$ of $\infty$-topoi is left exact. 
\end{proposition}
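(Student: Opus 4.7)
The plan is to factor the comparison through an auxiliary colimit-preserving functor $L(f) \colon \Pre(\C) \to \X_n$, namely the left Kan extension of $f$, viewed as valued in $\X_n$, along $j \colon \C \to \Pre(\C)$. Since $\X_n$ is an $n$-topos, hence presentable and cocomplete, Lemma \ref{lem_adjunction} ensures that $L(f)$ is a left adjoint.

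The first step is to identify
$$L(f) \simeq \tau_n^{\X} \circ L(i_n^{\X} f)$$
as colimit-preserving functors $\Pre(\C) \to \X_n$. The right-hand side preserves colimits (being a composite of left adjoints), and its restriction along $j$ is $\tau_n^{\X} \circ i_n^{\X} \circ f \simeq f$, since $\tau_n^{\X}$ is a Bousfield localization with $\tau_n^{\X} \circ i_n^{\X} \simeq \id_{\X_n}$. The universal property of $\Pre(\C)$ as the free cocompletion of $\C$ (\cite[5.1.5.6]{HTT}) gives the claimed equivalence. Now recall that the $(n\text{-}1)$-truncation functor $\tau_n^{\X}$ is a left exact localization of $\X$, since truncation in an $\infty$-topos preserves finite limits (\cite[5.5.6.28]{HTT}). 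Combined with the hypothesis that $L(i_n^{\X} f)$ is left exact, this shows that $L(f)$ is left exact.

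The second step is to transfer left exactness from $L(f)$ to $L(f)_n$. The same universal-property argument applied to the cocomplete $n$-category $\X_n$ yields $L(f) \simeq L(f)_n \circ \tau_n$: both sides are colimit-preserving functors $\Pre(\C) \to \X_n$, and both restrict to $f$ along $j$ because $\tau_n \circ j = j_n$. Composing with $i_n$ on the right and using $\tau_n \circ i_n \simeq \id$ gives
$$L(f)_n \simeq L(f) \circ i_n.$$
Since $i_n$ is a right adjoint it preserves finite limits, and $L(f)$ is left exact by the previous paragraph, so the composite $L(f) \circ i_n \simeq L(f)_n$ is left exact, as required.

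The only genuine ingredient beyond formal manipulations is the left exactness of the truncation functor $\tau_n^{\X}$ on the $\infty$-topos $\X$; the rest is a sequence of applications of the universal property of $\Pre(\C)$, so no substantive obstacle is expected.
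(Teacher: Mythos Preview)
Your argument contains a genuine error at the second step: the truncation functor $\tau_n^{\X} \colon \X \to \X_n$ is \emph{not} left exact in general. The reference \cite[5.5.6.28]{HTT} only asserts that truncation in an $\infty$-topos preserves finite \emph{products}; it does not preserve pullbacks. For a concrete counterexample, take $\X = \Sp$ and $n=1$: then $\tau_1^{\Sp} = \pi_0$, and the pullback $\ast \times_{S^1} \ast \simeq \Omega S^1 \simeq \mathbb{Z}$ has $\pi_0 = \mathbb{Z}$, while $\pi_0(\ast) \times_{\pi_0(S^1)} \pi_0(\ast) = \ast$. In fact the paper remarks explicitly (just before the proposition) that $L(j_n) \simeq \tau_n \colon \Pre(\C) \to \Pre(\C)_n$ is not left exact, which directly contradicts your intermediate claim that $L(f)$ is left exact whenever $L(i_n^{\X} f)$ is.

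Your identifications $L(f) \simeq \tau_n^{\X} \circ L(i_n^{\X} f)$ and $L(f)_n \simeq L(f) \circ i_n$ are correct, and the argument is easily repaired along the lines of the paper's proof: since $L(i_n^{\X} f)$ is left exact, it preserves $(n\text{-}1)$-truncated objects \cite[5.5.6.16]{HTT}, so the composite $L(i_n^{\X} f) \circ i_n$ already lands in $\X_n$. Hence $\tau_n^{\X}$ acts as the identity on its image, giving $L(f)_n \simeq L(i_n^{\X} f)_{|\Pre(\C)_n}$, which is left exact as a composite of left exact functors. The key point is that one must restrict the domain \emph{before} attempting to land in $\X_n$, rather than post-composing with the non-exact $\tau_n^{\X}$.
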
 
\begin{proof} If the left adjoint $L(i^{\X}_{n} f)$ is left exact, then both $L(i^{\X}_n f)$ as well as its right adjoint preserve $(n\text{-}1)$-truncated objects (see \cite[5.5.6.16]{HTT}), and therefore the (truncated) functor $L(i^{\X}_{n} f)_{| \Pre(\C)_n} \colon \Pre(\C)_n \to \X_n$ is again a left exact left adjoint functor. But this agrees with $L(f)_n$, since both are left adjoints extending $\C \xrightarrow{f} \X_n$ (Lemma \ref{lem_adjunction}), so they are equivalent by Proposition~\ref{prop:free_cocomplete_n_category}.
\end{proof}

We will prove a partial converse of Proposition \ref{prop_comparison} at the end of the section. 

\subsection{Diaconescu's theorem for $n$-topoi} Proposition \ref{prop_comparison} implies that any criterion for the left exactness of the functor $L(i^{\X}_{n} f) \colon \Pre(\C) \to \X$ in the context of $\infty$-topoi automatically yields a criterion for the left exactness of $L(f)_n \colon \Pre(\C)_n \to \X_n$ in the context of $n$-topoi for any $n \geq 1$. (We note that the left exactness of $L(f)_n$ does not imply that $L(f) \colon \Pre(\C) \to \X_n$ is left exact. This fails, for example, when $f$ is simply the Yoneda embedding $j_n \colon \C \to \Pre(\C)_n$, in which case $L(f)_n$ is the identity functor, but $L(f) \simeq \tau_n$ is not left exact.)

\smallskip

Let $\C$ be a small $n$-category and let $\Y$ be an $n$-topos, where $1 \leq n < \infty$. The following theorem gives necessary and sufficient conditions for a functor $f \colon \C \to \Y$ to extend to a left adjoint $L(f)_n \colon \Pre(\C)_n \to \Y$ which is left exact. This result  generalizes the characterizations of Theorem \ref{Thm1} and Theorem \ref{Thm2} to $n$-topoi for any $n \geq 1$. 

\begin{theorem} \label{Thm3}
Let $\C$ be a small $n$-category and let $\Y$ be an $n$-topos, $1 \leq n < \infty$. For a functor $f \colon \C \to \Y$, the following are equivalent:
\begin{enumerate}
\item $L(f)_n \colon \Pre(\C)_n \to \Y$ is left exact; 
\item $L(f)_n\colon \Pre(\C)_n \to \Y$ preserves finite limits of representable presheaves, that is, $L(f)_n$ preserves limits of diagrams of the form 
$$(K \to \C \to \Pre(\C)_n)$$ 
where $K$ is a finite simplicial set;
\item $f\colon \C \to \Y$ is covering-flat (see Definition \ref{covering_condition}).
\end{enumerate}
\end{theorem}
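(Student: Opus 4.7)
The plan is to close the cycle $(1) \Rightarrow (2) \Rightarrow (3) \Rightarrow (1)$. The implication $(1) \Rightarrow (2)$ is immediate, so I will concentrate on $(2) \Rightarrow (3)$ and $(3) \Rightarrow (1)$.

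For $(2) \Rightarrow (3)$, the idea is to imitate the argument of Proposition~\ref{flat_vs_cov-flat} inside $\Pre(\C)_n$ rather than $\Pre(\C)$. Given a finite diagram $g \colon K \to \C$ and a cone $h^{\triangleleft}$ on $h = fg$ in $\Y$, Lemma~\ref{lem_cov-flat} lets me assume $h^{\triangleleft}$ is a limit cone with cone object $y = \lim_K fg$. Setting $Z := \lim_K j_n g \in \Pre(\C)_n$, assumption (2) gives $L(f)_n(Z) \simeq y$, with the limit cone for $y$ obtained by applying $L(f)_n$ to that for $Z$. I will cover $Z$ by the canonical effective epimorphism $q \colon \bigsqcup_{\alpha} j_n(c_\alpha) \to Z$ indexed by homotopy classes of maps $j_n(c) \to Z$. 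The key observation is that $L(f)_n$ preserves the \v{C}ech nerve of $q$: its $k$-th term is a coproduct of iterated pullbacks $j_n(c_{\alpha_0}) \times_Z \cdots \times_Z j_n(c_{\alpha_k})$, each of which is a finite limit of representables in $\Pre(\C)_n$ (since $Z$ itself is a finite limit of representables and the $n$-topos $\Pre(\C)_n$ has distributive coproducts and pullbacks). By (2) and the colimit-preservation of $L(f)_n$ (Lemma~\ref{lem_adjunction}), $L(f)_n$ carries this \v{C}ech nerve to the \v{C}ech nerve of $\bigsqcup_\alpha f(c_\alpha) \to y$ in $\Y$; preserving the colimit of the \v{C}ech nerve then shows that $\bigsqcup_\alpha f(c_\alpha) \to y$ is an effective epimorphism in $\Y$. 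Finally, each $f(c_\alpha) \to y$ corresponds via Yoneda to a cone $\alpha^{\triangleleft} \colon K^{\triangleleft} \to \C$ on $g$, and compatibility of the limit cones yields $h^{\triangleleft} \circ (f(c_\alpha) \to y) \simeq f \alpha^{\triangleleft}$, placing this morphism in the sieve $S_{g, h^{\triangleleft}}$.

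For $(3) \Rightarrow (1)$, the plan is to reduce to the case $n = \infty$ and invoke Theorem~\ref{Thm2}. I will choose a hypercomplete $\infty$-topos $\X$ with $\X_n \simeq \Y$: starting from the $n$-localic $\infty$-topos on $\Y$ (\cite[6.4.5]{HTT}) and passing to its hypercompletion works, since $(n\text{-}1)$-truncated objects are automatically hypercomplete and hence unaffected by hypercompletion. Writing $i_n^{\X} \colon \Y \hookrightarrow \X$ for the inclusion, the crux is to show that $f$ is covering-flat in $\Y$ if and only if $i_n^{\X} f$ is covering-flat in $\X$. By Lemma~\ref{lem_cov-flat}, covering-flatness of $i_n^{\X} f$ need only be verified on limit cones in $\X$, and since limits of $(n\text{-}1)$-truncated objects remain $(n\text{-}1)$-truncated, the cone object of any such limit cone over $i_n^{\X} fg$ automatically lies in $\Y$. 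Both $i_n^{\X}$ and $\tau_n^{\X}$ preserve effective epimorphisms and respect the factorizations defining $S_{g, h^{\triangleleft}}$, so covering families transfer between $\Y$ and $\X$. Granting this, (3) yields that $i_n^{\X} f$ is covering-flat in $\X$; Theorem~\ref{Thm2} (applicable since $\X$ is hypercomplete) then gives left exactness of $L(i_n^{\X} f)$; and Proposition~\ref{prop_comparison} gives left exactness of $L(f)_n$, establishing (1).

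The main obstacle will be verifying the equivalence of covering-flatness in $\Y$ and in $\X$ used in $(3) \Rightarrow (1)$. While each individual compatibility check (behavior of $i_n^{\X}$ and $\tau_n^{\X}$ on effective epimorphisms and on the sieves $S_{g, h^{\triangleleft}}$, and the existence of a hypercomplete lift $\X$ of $\Y$) is essentially routine, combining them cleanly requires some care. Once this step is settled, the remainder of the argument is a direct combination of Theorem~\ref{Thm2}, Proposition~\ref{prop_comparison}, Lemma~\ref{lem_cov-flat}, and Lemma~\ref{lem_adjunction}.
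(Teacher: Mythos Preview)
Your proposal is correct and follows the paper's overall strategy: close the cycle $(1) \Rightarrow (2) \Rightarrow (3) \Rightarrow (1)$, with $(3) \Rightarrow (1)$ handled by lifting to a hypercomplete $\infty$-topos $\X$ with $\X_n \simeq \Y$, establishing that covering-flatness of $f$ in $\Y$ is equivalent to covering-flatness of $i_n^{\X} f$ in $\X$ (this is the paper's Claim~\ref{Claim2}), and then invoking Theorem~\ref{Thm2} together with Proposition~\ref{prop_comparison}.

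Your $(2) \Rightarrow (3)$ argument is slightly more direct than the paper's. You observe that the $k$-th level of the \v{C}ech nerve of $q$ decomposes, by universality of coproducts, as a coproduct of iterated pullbacks $j_n(c_{\alpha_0}) \times_Z \cdots \times_Z j_n(c_{\alpha_k})$, each of which is the limit of a diagram $\{0,\ldots,k\} \ast K \to \C \to \Pre(\C)_n$ and hence is preserved by $L(f)_n$ under hypothesis~(2). This shows directly that $L(f)_n$ carries the \v{C}ech nerve of $q$ to that of $L(f)_n(q)$, whence $L(f)_n(q)$ is an effective epimorphism. The paper instead only uses that $L(f)_n$ is left exact at coproducts of representables to deduce that $V_\bullet = L(f)_n(U_\bullet)$ is a \emph{groupoid} object, then lifts to the ambient $\infty$-topos $\X$ and uses effectiveness of groupoids there before descending via $\tau_n^{\X}$ (Claim~\ref{Claim3}). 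Your route avoids this detour through $\X$ in the $(2) \Rightarrow (3)$ step, at the cost of appealing to the full strength of~(2) rather than only left-exactness at coproducts of representables; the paper's route uses a weaker consequence of~(2) but pays for it with the extra passage through $\X$.
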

\begin{proof}
(1) $\Rightarrow$ (2) is obvious. (2) $\Rightarrow$ (3) is analogous to Proposition \ref{flat_vs_cov-flat}, but the proof requires a small modification in this context so that it does not depend directly on Theorem \ref{Thm1}  (cf. Remark \ref{directproof_n-topoi}).  Let $(K, g \colon K \to \C, h^{\triangleleft} \colon K^{\triangleleft} \to \Y, y)$ be as in Definition \ref{covering_condition}(b) where $h^{\triangleleft}$ is a limit cone (using Lemma \ref{lem_cov-flat}). Assuming that $f \colon \C \to \Y$ satisfies (2), we may identify $h^{\triangleleft}$ with the cone $L(f)_n \circ (j_n g)^{\triangleleft}$ where $(j_n g)^{\triangleleft} \colon K^{\triangleleft} \to \Pre(\C)_n$ is a limit cone on $j_n g \colon K \to \C \to \Pre(\C)_n$ with cone object denoted by $Z \in \Pre(\C)_n$. Let $\{q_i \colon j_n(c_i) \rightarrow Z\}_{i \in I}$ be a covering family of $Z$ by representable presheaves.

\begin{claim} \label{Claim3} The image under $L(f)_n$ of the covering family $\{q_i \colon j_n(c_i) \rightarrow Z\}_{i \in I}$ is a covering family of $L(f)_n(Z)=y$ in $\Y$.
\end{claim}
\noindent \emph{Proof of Claim \ref{Claim3}}. It will be convenient to assume that $\Y$ is the $n$-topos $\X_n$ associated to an $\infty$-topos $\X$ \cite[6.4.1.5]{HTT} and write $i^{\X}_n \colon \X_n \to \X$ for the inclusion functor and $\tau_n^{\X} \colon \X \to \X_n$ for its left adjoint/truncation functor.  

Let $U_{\bullet}$ be a \v{C}ech nerve of the effective epimorphism 
$$q \colon \bigsqcup_{i \in I} j_n(c_i) \longrightarrow Z,$$
defined by the maps $q_i \colon j_n(c_i) \to Z$.  Then consider the augmented simplicial object $V_{\bullet} := L(f)_n \circ U_{\bullet}$ in $\X_n$. Similarly to the proof of Theorem \ref{Thm1}, we note that $L(f)_n$ is left exact at the representable presheaves (in the sense of \cite[6.1.4.1]{HTT}) and therefore it is also left exact at small coproducts of representable presheaves (using \cite[6.1.5.1]{HTT}). Then the underlying simplicial object of $V_{\bullet}$ is again a groupoid object (see \cite[6.1.4.3]{HTT}), and consequently, so is also the (underlying) simplicial object $i_n^{\X} V_{\bullet}$ in $\X$. 
It follows that the canonical morphism in $\X$ 
$$i_n^{\X}(V_0) \longrightarrow \mathrm{colim}_{N(\Delta^{op})} i_n^{\X} V_{\bullet}$$
is an effective epimorphism and $i_n^{\X} V_{\bullet}$ is its \v{C}ech nerve. Applying $\tau_n^{\X}$, we conclude that the morphism
\begin{equation} \label{augmentation_map} \tag{$\star$}
V_0 \to \mathrm{colim}_{N(\Delta^{op})} V_{\bullet}
\end{equation}
is an effective epimorphism in $\X_n$ since effective epimorphisms can be detected at the level of the ordinary $(1\text{-})$-topos \cite[7.2.1.14]{HTT}. On the other hand, since $q$ is an effective epimorphism in $\Pre(\C)_n$, it follows that the augmented simplicial object $U_{\bullet}$ is a colimit diagram, therefore so is $V_{\bullet}$ in $\X_n$, because $L(f)_n$ preserves colimits. This means that $\mathrm{colim}_{N(\Delta^{op})} V_{\bullet} \simeq V_{-1} = y$ and the effective epimorphism \eqref{augmentation_map} can be identified with the morphsm $L(f)_n(q) \colon \bigsqcup_{i \in I} f(c_i)\to y$.  This completes the proof of the claim. \qed

\smallskip

Note that each morphism $(j_n(c) \to Z)$ corresponds (using the Yoneda embedding) to a cone $g^{\triangleleft} \colon K^{\triangleleft} \rightarrow \C$ on $g$. Therefore the covering family of Claim \ref{Claim3}
$$\{L(f)_n(q_i) \colon f(c_i) \to y\}_{i \in I}$$ 
is in $S_{g, h^{\triangleleft}}$ as required.

(3) $\Rightarrow$ (1):  We may assume that $\Y$ is the $n$-topos $\X_n$ associated to an $\infty$-topos $\X$ \cite[6.4.1.5]{HTT}. We write $i^{\X}_n \colon \X_n \to \X$ for the inclusion functor and $\tau_n^{\X} \colon \X \to \X_n$ for its left adjoint/truncation functor.  The proof is based on the following crucial observation:

\begin{claim} \label{Claim2} The functor $f\colon \C \to \X_n$ is covering-flat if and only if $i^{\X}_n f \colon \C \to \X$ is covering-flat. 
\end{claim}
\noindent \emph{Proof of Claim \ref{Claim2}}.  Let $g \colon K \to \C$ be a diagram, where $K$ is a finite simplicial set, and let $h^{\triangleleft} \colon K^{\triangleleft} \to \X_n$ be a limit of the diagram $h = f g \colon K \to \X_n$ with cone object $y \in \X_n$. Since the inclusion $i_n^{\X}\colon \X_n \subset \X$ preserves small limits, the cone $i_n^{\X} h^{\triangleleft} \colon K^{\triangleleft} \to \X$ is a limit also in $\X$. If $f \colon \C \to \X_n$ is covering-flat, then there is a set $S$ of morphisms $\{y'_i \to y\}_{i \in I}$ in $S_{g, h^{\triangleleft}}$ such that the canonical map in $\X_n$
\begin{equation} \label{eff_epi} \tag{*}
\bigsqcup_{i \in I} y'_i \longrightarrow y
\end{equation}
is an effective epimorphism.  The corresponding set $S'= i^{\X}_n(S)$ of morphisms $\{i_n^{\X}(y'_i) \to i_n^{\X}(y)\}_{i \in I}$ is clearly a subset of $S_{g, i^{\X}_n h^{\triangleleft}}$. Then it suffices to show that the canonical map in $\X$
$$\bigsqcup_{i \in I} i_n^{\X}(y'_i) \to i_n^{\X}(y)$$
is an effective epimorphism. Since being an effective epimorphism is detected at the level of the ordinary $(1\text{-})$topos \cite[7.2.1.14]{HTT}, this follows from the fact that \eqref{eff_epi} is an effective epimorphism. 

The converse is similar. If $i_n^{\X} f \colon \C \to \X$ is covering-flat, then there is a set $T$ of morphisms $\{y'_i \to i_n^{\X}(y)\}_{i \in J}$ in $S_{g, i_n^{\X}h^{\triangleleft}}$ such that the canonical map in $\X$
\begin{equation} \label{eff_epi2} \tag{**}
\bigsqcup_{i \in J} y'_i \longrightarrow i_n^{\X}(y)
\end{equation}
is an effective epimorphism.  Then the corresponding set $T'= \tau^{\X}_n(T)$ of morphisms $\{\tau_n^{\X}(y'_i) \to y)\}_{i \in J}$ (using $\tau_n^{\X} i_n^{\X}(y) \simeq y$) is a subset of $S_{g, h^{\triangleleft}}$ and the canonical map 
$$\bigsqcup_{i \in J} \tau_n^{\X}(y'_i) \to y$$
is again an effective epimorphism in $\X_n$ because effective epimorphisms are detected at the level of the ordinary $(1\text{-})$topos \cite[7.2.1.14]{HTT} (alternatively, it suffices to note that \eqref{eff_epi2} factors canonically through the last morphism and apply \cite[6.2.3.12]{HTT}). This completes the proof of the claim. \qed

\smallskip

\noindent To wrap up the proof, we now also assume without loss of generality that $\X$ is hypercomplete. This is possible since the $(n\text{-}1)$-truncated objects in an $\infty$-topos are hypercomplete. Assuming (3), it follows that the functor $L(i_n^{\X}f) \colon \Pre(\C) \to \X$ is left exact by applying Claim \ref{Claim2} and Theorem \ref{Thm2}. Therefore the functor $L(f)_n \colon \Pre(\C)_n \to \X_n$ is also left exact by Proposition \ref{prop_comparison}. This completes the proof of (3) $\Rightarrow$ (1).
\end{proof}

\begin{definition}[Flat functors] \label{def:flat_functor2}
Let $\C$ be a small $n$-category and let $\Y$ be an $n$-topos, where $1 \leq n < \infty$. A functor $f \colon \C \to \Y$ is called \emph{flat} if it satisfies the equivalent conditions of Theorem \ref{Thm3}. We denote by $\Fun^{\fl}(\C, \Y)$ the $n$-category of flat functors. 
\end{definition}

\begin{remark}
 In the case $n=1$, Theorem~\ref{Thm3} recovers the theorem that filtering functors and flat functors coincide (see \cite[VII.9.1]{MM}). Indeed, by unraveling the definitions of the categories and cones appearing in \cite[VII.8.4]{MM}, we immediately see that a functor from an ordinary (1-)category to an ordinary (1-)topos is filtering if and only if it is covering-flat in the sense of Definition~\ref{covering_condition}.
\end{remark}

\begin{example} \label{example_groupoid}
Let $G$ be (the nerve of) an ordinary group and let $$\colim \colon \Pre(G)_1 \simeq \Fun(G^{\op}, \Set) \to \Set$$ be the colimit functor. This is induced by the diagram $f \colon G \to \Set$ that is given by the one-point set with the trivial $G$-action, that is, $\colim \simeq L(f)_1$. Note that  $L(f)_1$ preserves the terminal object and pullbacks of representable presheaves (as well as monomorphisms), but $L(f)_1$ does not preserve products of representable presheaves -- indeed this happens only when $f$ is the corepresentable functor, i.e., the free $G$-set $G$ with the natural (left) $G$-action. This example shows that condition (2)$'$ of Theorem \ref{Thm1} is not sufficient in the context of $n$-topoi for $n < \infty$. 
\end{example}

\begin{remark} \label{directproof_n-topoi}
It is interesting to compare the proofs of the implication (2) $\Rightarrow$ (1) for $n=\infty$ (Theorem \ref{Thm1}) and for $n<\infty$ (Theorem \ref{Thm3}). 
Compared to the proof in Theorem \ref{Thm1}, the proof of Theorem \ref{Thm3}(2)$\Rightarrow$(1) is indirect and employs some of our previous results so far. Indeed the proof is based on Theorem \ref{Thm2} (via Claim \ref{Claim2}) and Proposition \ref{prop_comparison}. The proof in Theorem \ref{Thm1} does not apply here directly, because it would be needed to show in addition that the groupoid $L(f)(U_{\bullet})$ (as defined in the proof of Theorem \ref{Thm1}) is also $n$-\emph{efficient}. This is possible assuming (2), however, the proof does not seem to be straightforward. On the other hand, using this approach, it is also possible to identify specific shapes of diagrams that would suffice for the left exactness of $L(f)_n$ as in Theorem \ref{Thm1}(2)$'$ (cf. Remark \ref{example_groupoid}), but we will not pursue this further. Moreover, we think that the proof method of Theorem \ref{Thm3} by \emph{reduction to} $\infty$, where higher topos theory admits its most compact form, is a natural approach to statements about $n$-topoi for any finite $n \geq 1$. 
\end{remark}

We have the following extension of Corollary \ref{cor:flat_functors} to $n$-topoi for $n < \infty$. 

\begin{corollary} \label{cor:flat_functors2}
Let $\C$ be a small $n$-category and let $\Y$ be an $n$-topos, $1 \leq n < \infty$. Then restriction along the Yoneda embedding $j_n \colon \C \to \Pre(\C)_n$ defines an equivalence of $\infty$-categories  
$$\Fun^{L, \lex}(\Pre(\C)_n, \Y) \xrightarrow{j_n^*} \Fun^{\fl}(\C, \Y).$$
In other words, the $n$-topos $\Pre(\C)_n$ is the classifying $n$-topos for flat functors. 
\end{corollary}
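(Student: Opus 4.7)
The plan is to deduce this directly from Proposition~\ref{prop:free_cocomplete_n_category} combined with the definition of flatness (Definition~\ref{def:flat_functor2}) extracted from Theorem~\ref{Thm3}. Indeed, Proposition~\ref{prop:free_cocomplete_n_category} already provides the equivalence
\[
j_n^* \colon \Fun^{L}(\Pre(\C)_n, \Y) \xrightarrow{\;\simeq\;} \Fun(\C, \Y),
\]
whose inverse sends a functor $f \colon \C \to \Y$ to its left Kan extension $L(f)_n$ along $j_n$. The only remaining issue is to identify the two full subcategories under this equivalence.

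First, I would observe that a left adjoint functor $F \colon \Pre(\C)_n \to \Y$ is left exact if and only if its restriction $f := F j_n = j_n^*(F)$ is flat in the sense of Definition~\ref{def:flat_functor2}. Since $j_n^*$ is an equivalence with inverse $L(-)_n$, we have a canonical equivalence $F \simeq L(f)_n$, and so condition (1) of Theorem~\ref{Thm3} applied to $f$ is precisely the condition that $F$ is left exact. Conversely, given a flat functor $f \colon \C \to \Y$, its image $L(f)_n$ under the inverse equivalence lies in $\Fun^{L,\lex}(\Pre(\C)_n,\Y)$ by the very definition of flatness.

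It then follows that the equivalence of Proposition~\ref{prop:free_cocomplete_n_category} restricts to an equivalence between the full subcategories $\Fun^{L,\lex}(\Pre(\C)_n, \Y) \subset \Fun^L(\Pre(\C)_n, \Y)$ and $\Fun^{\fl}(\C, \Y) \subset \Fun(\C, \Y)$. Since both are defined as full subcategories spanned by objects corresponding under the equivalence, no additional verification is required beyond this matching of objects. The ``main obstacle'' for this corollary was already absorbed by Theorem~\ref{Thm3}, whose equivalence $(1) \Leftrightarrow (3)$ makes the notion of flatness (internally expressible via covering-flatness) match the external condition of left exactness of $L(f)_n$; given that theorem, the corollary is a formal consequence.
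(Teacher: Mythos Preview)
Your proposal is correct and follows exactly the approach the paper intends: the corollary is stated without proof because it is an immediate consequence of Proposition~\ref{prop:free_cocomplete_n_category} together with Definition~\ref{def:flat_functor2}, precisely as you spell out. Your observation that the equivalence $j_n^*$ matches the full subcategory of left exact left adjoints with the full subcategory of flat functors (since $F \simeq L(j_n^*F)_n$) is the entire content, and the paper treats this as evident.
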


Similarly, we obtain the following extension of Corollary \ref{cor:finite_lim} to $n$-topoi for $n < \infty$. 

\begin{corollary}
Let $\C$ be a small $n$-category with finite limits and let $\Y$ be an $n$-topos, $1 \leq n < \infty$. For a functor $f \colon \C \to \Y$, the following are equivalent:
\begin{enumerate}
\item $L(f)_n \colon \Pre(\C)_n \to \Y$ is left exact; 
\item $f$ preserves finite limits. 
\end{enumerate} 
\end{corollary}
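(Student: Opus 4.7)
The plan is to mimic the proof of Corollary~\ref{cor:finite_lim}, using Theorem~\ref{Thm3} in place of Theorem~\ref{Thm1}. The whole argument reduces to a single observation about the $n$-truncated Yoneda embedding $j_n\colon \C \to \Pre(\C)_n$, namely that it preserves finite limits.

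First I would verify that $j_n$ preserves finite limits. The Yoneda embedding $j\colon \C \to \Pre(\C)$ preserves all limits that exist in $\C$, and the inclusion $i_n\colon \Pre(\C)_n \hookrightarrow \Pre(\C)$ is a right adjoint (Lemma~\ref{lem_adjunction} applied to $\id_{\Pre(\C)_n}$, or directly from the general theory of truncations \cite[5.5.6]{HTT}), so it preserves and reflects limits. Since $j = i_n \circ j_n$ and $\C$ admits finite limits, it follows that $j_n$ preserves finite limits.

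For (1)$\Rightarrow$(2), note that $f \simeq L(f)_n \circ j_n$, so if $L(f)_n$ is left exact then $f$ is a composition of finite-limit preserving functors.

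For (2)$\Rightarrow$(1), I invoke the equivalence (1)$\Leftrightarrow$(2) of Theorem~\ref{Thm3}. Given a finite simplicial set $K$ and a diagram $g\colon K \to \C$, let $c \in \C$ be a limit of $g$. Since $j_n$ preserves finite limits, $j_n(c)$ is a limit of $j_n \circ g$ in $\Pre(\C)_n$. Applying $L(f)_n$ and using $L(f)_n \circ j_n \simeq f$ together with the hypothesis that $f$ preserves finite limits, we get
\[
L(f)_n\bigl(\lim_{K} j_n g\bigr) \simeq L(f)_n(j_n(c)) \simeq f(c) \simeq \lim_K f g \simeq \lim_K L(f)_n (j_n g),
\]
so $L(f)_n$ preserves finite limits of representable presheaves. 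By Theorem~\ref{Thm3}, $L(f)_n$ is left exact.

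There is no substantive obstacle here; the only mild subtlety is confirming that $j_n$ preserves finite limits when $\C$ has them, which is immediate from the fact that $i_n$ is a right adjoint. All the work has been done in Theorem~\ref{Thm3}.
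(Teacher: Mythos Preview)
Your proposal is correct and matches the paper's proof essentially verbatim: the paper also derives (1)$\Rightarrow$(2) from the fact that $j_n$ preserves finite limits, and (2)$\Rightarrow$(1) from Theorem~\ref{Thm3} using that $\C$ has finite limits and $j_n$ preserves them. Your additional justification that $j_n$ preserves finite limits (via $j = i_n \circ j_n$ with $i_n$ a right adjoint) is a welcome elaboration of a point the paper leaves implicit.
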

\begin{proof}
(1) $\Rightarrow$ (2) follows from the fact that $j_n \colon \C \to \Pre(\C)_n$ preserves finite limits. (2) $\Rightarrow$ (1) follows from Theorem \ref{Thm3} using that $\C$ has finite limits and $j_n$ preserves them. 
\end{proof}

\begin{example}
Let $\G$ be a small $n$-groupoid with a single object and let $\Y=\Sp_n$ be the $n$-topos associated to the $\infty$-category of spaces, $1 \leq n \leq \infty$. A functor $f \colon \G \to \Sp_n$ is flat if and only if it is equivalent to the corepresentable functor (free $\G$-space $\G$). To see this, note that $f \colon \G \to \Sp_n$ is flat (= covering-flat) if and only if the composite functor $\G \xrightarrow{f} \Sp_n \xrightarrow{i_n} \Sp$ is flat (=covering-flat) and use Example \ref{infty_grpd}. Thus, the $n$-category $\Fun^{L, \lex}(\Pre(G)_n, \Sp_n) \simeq \Fun^{\fl}(\G, \Sp_n)$ is equivalent to $\G$. 
\end{example}

\begin{remark}
 Let $\C$ be a small $n$-category. As in Example~\ref{example_category_of_elements_filtered}, we can show that the left Kan extension of $f \colon \C \rightarrow \Sp_n$ along the Yoneda embedding is left exact if and only if the category $f_{\ast \slash}$ of elements is cofiltered. The argument given in Example~\ref{example_category_of_elements_filtered} only relied on the two facts that a morphism $\bigsqcup_{y^{\prime} \in I} y^{\prime} \rightarrow \ast$ is an effective epimorphism if and only if one of the $y^{\prime}$ is non-empty and that there is a covering family $\{\ast \xrightarrow{u_i} y\}_{i \in I}$ for any $y$ in $\Sp$. Both of these facts are also true in the category $\Sp_n$ of $(n\text{-}1)$-truncated spaces.
\end{remark}

\subsection{Some applications} \label{application} Using Corollary \ref{cor:flat_functors2}, we also obtain the following generalization of \cite[6.4.5.5]{HTT} to general small $n$-categories. 

\begin{corollary} \label{cor:flat_functors3}
Let $\C$ be a small $n$-category and let $\X$ be a hypercomplete $m$-topos, where $1 \leq n \leq m \leq \infty$. Then the restriction functor 
$$\Fun^{L, \lex}(\Pre(\C)_m, \X) \rightarrow \Fun^{L, \lex}(\Pre(\C)_n, \X_n)$$
is an equivalence of $\infty$-categories.
\end{corollary}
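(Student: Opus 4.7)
The plan is to transport both sides along the flat-functor equivalences of Corollaries~\ref{cor:flat_functors} and~\ref{cor:flat_functors2} and then reduce the comparison to a statement about covering-flat functors, where the hypercompleteness hypothesis can be used.

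Since $n \leq m$, the small $n$-category $\C$ is in particular an $m$-category, so Corollary~\ref{cor:flat_functors} (if $m=\infty$) or Corollary~\ref{cor:flat_functors2} (if $m<\infty$) gives an equivalence
\[
j_m^{\ast} \colon \Fun^{L,\lex}(\Pre(\C)_m, \X) \xrightarrow{\sim} \Fun^{\fl}(\C, \X),
\]
while Corollary~\ref{cor:flat_functors2} gives an equivalence
\[
j_n^{\ast} \colon \Fun^{L,\lex}(\Pre(\C)_n, \X_n) \xrightarrow{\sim} \Fun^{\fl}(\C, \X_n).
\]
The first task is to identify, under these equivalences, the restriction functor of the statement with corestriction along $i_n^{\X}$. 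The content of this identification is that any flat functor $f \colon \C \to \X$ automatically takes values in $\X_n$: the left exact functor $L(f)_m \colon \Pre(\C)_m \to \X$ preserves $(n{-}1)$-truncated objects by \cite[5.5.6.16]{HTT}, and each representable $j_m(c)$ is already $(n{-}1)$-truncated in $\Pre(\C)_m$ because $\C$ is an $n$-category. Composing with $j_m = i_n \circ j_n$ then gives exactly the corestriction of $f$.

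Next, I would replace ``flat'' by ``covering-flat'' on both sides. On the right this is immediate from Theorem~\ref{Thm3}. On the left, Theorem~\ref{Thm2} applies when $m=\infty$ (this is precisely where the hypercompleteness hypothesis on $\X$ is used), and Theorem~\ref{Thm3} applies when $m<\infty$. After this reduction, the corollary amounts to showing that a functor $f' \colon \C \to \X_n$ is covering-flat if and only if $i_n^{\X} f' \colon \C \to \X$ is covering-flat.

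This last comparison is the direct analogue of Claim~\ref{Claim2} in the proof of Theorem~\ref{Thm3}, and the argument given there transfers verbatim to the present setting: the only ingredients used are that $i_n^{\X}$ preserves small limits, that the truncation functor $\tau_n^{\X}$ is left exact, and that effective epimorphisms are detected at the level of the underlying ordinary $1$-topos by \cite[7.2.1.14]{HTT}, all of which hold in any $m$-topos. The main obstacle, if any, is the bookkeeping in the first step, namely verifying that the Yoneda restriction of the given restriction functor agrees with corestriction along $i_n^{\X}$; the remainder is a straightforward application of results already established in the paper.
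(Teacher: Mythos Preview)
Your proposal is correct and follows essentially the same strategy as the paper's proof: both transport the restriction functor through the equivalences of Corollaries~\ref{cor:flat_functors} and~\ref{cor:flat_functors2} to a comparison between $\Fun^{\fl}(\C,\X)$ and $\Fun^{\fl}(\C,\X_n)$, and then appeal to the identification of flat with covering-flat (Theorems~\ref{Thm2} and~\ref{Thm3}) together with Claim~\ref{Claim2}. The only organizational difference is that the paper first reduces to the case $m=\infty$ (by embedding a finite $m$-topos into a hypercomplete $\infty$-topos) and then runs the argument once, whereas you treat all $m$ uniformly; both routes work and use the same ingredients.
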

\begin{proof}  For $1 \leq n < m < \infty$, we may consider a hypercomplete $\infty$-topos associated to $\X$ and therefore we observe that it suffices to prove the statement for $m=\infty$. By Corollary \ref{cor:flat_functors} and Corollary \ref{cor:flat_functors2}, the vertical functors in the commutative diagram
$$
\xymatrix{
\Fun^{L, \lex}(\Pre(\C), \X) \ar[d]_{j^*} \ar[r] & \Fun^{L, \lex}(\Pre(\C)_n, \X_n) \ar[d]^{j_n^*} \\
\Fun^{\fl}(\C, \X) & \Fun^{\fl}(\C, \X_n) \ar[l]
}
$$
are equivalences. The bottom functor is defined by the composition with the inclusion $i_n^{\X} \colon \X_n \subset \X$; this is well-defined using Claim \ref{Claim2} and the fact that flat functors and covering-flat functors agree in these cases by Theorem \ref{Thm2} and Theorem \ref{Thm3}, respectively. Moreover, the bottom functor is also fully faithful since $i_n^{\X}$ is. The bottom functor is also essentially surjective on objects since the diagram commutes and $j^{*}$ is an equivalence. It follows that the top functor is fully faithful and essentially surjective, as required. 
(Note that we have used that $\C$ is an $n$-category in this diagram.)
\end{proof}

Corollary \ref{cor:flat_functors3} has the following immediate consequence for topological localizations of $\Pre(\C)$ where $\C$ is an $n$-category. 

\begin{corollary} \label{cor:flat_functors4} 
Let $\C$ be a small $n$-category with a Grothendieck topology $\tau$ and let $\Sh(\C, \tau)$ denote the associated $\infty$-topos of sheaves. For any hypercomplete $m$-topos $\X$, where $1 \leq n \leq m \leq \infty$, the restriction functor 
$$\Fun^{L, \lex}(\Sh(\C,\tau)_m, \X) \rightarrow \Fun^{L, \lex}(\Sh(\C, \tau)_n, \X_n)$$
is an equivalence of $\infty$-categories.
\end{corollary}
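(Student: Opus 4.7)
The plan is to deduce the statement from Corollary~\ref{cor:flat_functors3} by realizing each of the two sheaf $\Fun$-categories as a full subcategory of the corresponding presheaf $\Fun$-category and then transferring the equivalence of Corollary~\ref{cor:flat_functors3} to these subcategories. Concretely, write $S_\tau$ for the set of monomorphisms $R \hookrightarrow j_n(c)$ in $\Pre(\C)_n$ corresponding to the covering sieves of $\tau$; viewed inside $\Pre(\C)_k$ via the canonical inclusion for $k \geq n$, this set exhibits $\Sh(\C,\tau)_k$ as the left exact accessible (topological) localization of $\Pre(\C)_k$ at $S_\tau$. Since sheafification is left exact, the universal property of such localizations identifies
\[
\Fun^{L,\lex}(\Sh(\C,\tau)_k, \Y) \hookrightarrow \Fun^{L,\lex}(\Pre(\C)_k, \Y)
\]
with the full subcategory of left exact left adjoints that invert every morphism in $S_\tau$, for any cocomplete $\infty$-category $\Y$ (in our case $k = m$ with $\Y = \X$, and $k = n$ with $\Y = \X_n$).

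Combining these identifications with Corollary~\ref{cor:flat_functors3} yields a commutative square
\[
\xymatrix{
\Fun^{L,\lex}(\Sh(\C,\tau)_m, \X) \ar@{^{(}->}[d] \ar[r] & \Fun^{L,\lex}(\Sh(\C,\tau)_n, \X_n) \ar@{^{(}->}[d] \\
\Fun^{L,\lex}(\Pre(\C)_m, \X) \ar[r]^-{\simeq} & \Fun^{L,\lex}(\Pre(\C)_n, \X_n)
}
\]
in which the vertical arrows are fully faithful embeddings and the bottom horizontal arrow is an equivalence. It therefore suffices to show that this bottom equivalence matches the two $S_\tau$-inverting subcategories, which amounts to the following claim: a left exact left adjoint $F \colon \Pre(\C)_m \to \X$ inverts the morphisms of $S_\tau$ if and only if the associated $F_n \colon \Pre(\C)_n \to \X_n$ does.

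This final assertion follows from truncation bookkeeping. Every $s \colon R \hookrightarrow j_n(c)$ in $S_\tau$ has source and target that are $(n\text{-}1)$-truncated in $\Pre(\C)_m$, and since left exact left adjoints preserve truncated objects \cite[5.5.6.16]{HTT}, $F(s)$ is already a morphism in $\X_n$ and agrees with $F_n(s)$. Hence $F(s)$ is an equivalence in $\X$ if and only if $F_n(s)$ is an equivalence in $\X_n$, giving the required matching of subcategories. No serious obstacle is anticipated: the argument is essentially formal once the ingredients are in place, the heavy lifting having been done in Corollary~\ref{cor:flat_functors3} (which in turn rests on the covering-flat characterization of flatness via Theorems~\ref{Thm2} and~\ref{Thm3}). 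The only point requiring mild care is to invoke the universal property of left exact accessible localizations correctly so that both vertical embeddings are identified using the \emph{same} set $S_\tau$ sitting in $\Pre(\C)_n \subseteq \Pre(\C)_m$.
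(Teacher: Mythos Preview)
Your proposal is correct and follows essentially the same approach as the paper's proof: both identify $\Fun^{L,\lex}(\Sh(\C,\tau)_k,-)$ as the full subcategory of $\Fun^{L,\lex}(\Pre(\C)_k,-)$ spanned by functors inverting the covering sieves $S_\tau$, invoke Corollary~\ref{cor:flat_functors3}, and use that the sieves already live in $\Pre(\C)_n$ to match the two subcategories. Your truncation argument for the matching step (via \cite[5.5.6.16]{HTT}) is a slightly more explicit version of what the paper indicates by the parenthetical ``in fact, in $\Pre(\C)_n$''.
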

\begin{proof}
The proof follows \cite[6.4.5.6]{HTT}. Since $\C$ is an $n$-category, every representable presheaf is in $\Pre(\C)_n$ and therefore so are its subobjects. 
For any $m \geq n$, the $\infty$-category $\Fun^{L, \lex}(\Sh(\C, \tau)_m, \X)$ can be identified with a full subcategory of the $\infty$-category $\Fun^{L, \lex}(\Pre(\C)_m, \X)$. More specifically, this subcategory is spanned by the functors $F \colon \Pre(\C)_m \to \X$ with property that each monomorphism 
$U \to j(c)$ in $\Pre(\C)_m$ (in fact, in $\Pre(\C)_n$), which becomes an equivalence in $\Sh(\C, \tau)_m$, is sent by $F$ to an equivalence in $\X$. Then the claim follows from Corollary \ref{cor:flat_functors3}.
\end{proof}

 Passing to right adjoints shows that, under the above assumptions, the $\infty$-category of geometric morphisms $\X \rightarrow \Sh(\C,\tau)_m$ is equivalent to the $\infty$-category of geometric morphisms $\X_n \rightarrow \Sh(\C,\tau)_n$.

\begin{remark}
Assuming that $\C$ is a small $n$-category with finite limits, then the $\infty$-topos $\Pre(\C)$ is $n$-localic \cite[6.4.5; esp. 6.4.5.5]{HTT}. This fails in general if $\C$ does not have finite limits \cite[20.4.0.1]{SAG}, \cite[1.14]{Ha}. In other words, the claim of Corollary \ref{cor:flat_functors3} cannot be extended to all $\infty$-topoi $\X$ in general. Thus, the statements of Corollaries \ref{cor:flat_functors3} and \ref{cor:flat_functors4} suggest a property weaker than $n$-locality which holds only relative to hypercomplete $\infty$-topoi and is satisfied by the examples that arise from $n$-sites.
\end{remark}

\begin{remark}
Given an arbitrary small $n$-category $\C$, $1 \leq n < \infty$, and an arbitrary $\infty$-topos $\X$ with hypercompletion denoted by $\ell \colon \X \to \X^{\wedge}$, then composition with $\ell$ defines a fully faithful functor 
\begin{equation} \label{hypercompletion_flatness} \tag{$\wedge$}
\Fun^{L, \lex}(\Pre(\C), \X) \rightarrow \Fun^{L, \lex}(\Pre(\C), \X^{\wedge})
\end{equation}
but this is not an equivalence in general (since Corollary \ref{cor:flat_functors4} fails for general $\infty$-topoi $\X$). In more detail, since the representable presheaves are $n$-truncated in $\Pre(\C)$, the restriction along the Yoneda embedding $j \colon \C \to \Pre(\C)$ identifies the functor $\infty$-categories in \eqref{hypercompletion_flatness} with full subcategories of $\Fun(\C, \X_n)$. More specifically, we may identify 
$$\Fun^{L, \lex}(\Pre(\C), \X) \text{ (resp. } \Fun^{L, \lex}(\Pre(\C), \X^{\wedge})\text{)}$$ 
with the full subcategory of functors $f \colon \C \to \X_n$ for which $i_n^{\X} f \colon  \C \to \X$ (resp. $\ell i_n^{\X} f \colon \C \to \X^{\wedge}$) is flat. By Theorem \ref{Thm2} and Claim \ref{Claim2}, the following statements are equivalent:
\begin{itemize}
\item[(a)] the composite functor $\C \xrightarrow{f} \X_n \xrightarrow{i_n^{\X}} \X \xrightarrow{\ell} \X^{\wedge}$ is flat;
\item[(b)] $f \colon \C \to \X_n = (\X^{\wedge})_n$ is covering-flat; 
\item[(c)] the composite functor $\C \xrightarrow{f} \X_n \xrightarrow{i_n^{\X}} \X$ is covering-flat.
\end{itemize} 
On the other hand, the characteristic property for $\Fun^{L, \lex}(\Pre(\C), \X)$, i.e., 
\begin{itemize}
\item[(d)] the composite functor $\C \xrightarrow{f} \X_n \xrightarrow{i_n^{\X}} \X$ is flat,
\end{itemize}
is strictly stronger than (a)--(c) for general $\X$.
\end{remark}

Finally, we can now prove a partial converse of Proposition \ref{prop_comparison}.

\begin{corollary}  \label{prop_comparison2}
Let $\C$ be a small $n$-category, where $1 \leq n < \infty$, and let $\X$ be a hypercomplete $\infty$-topos. Let $\X_{n}$ denote the $n$-topos of $\text{(n--1)}$-truncated objects in $\X$ and let $\tau^{\X}_n \colon \X \to \X_n$ denote the localization functor which is left adjoint to the inclusion functor $i^{\X}_{n} \colon \X_n \subset \X$. 

For a functor $f \colon \C \to \X_n$, the induced functor $L(f)_n \colon \Pre(\C)_n \to \X_n$ of $n$-topoi is left exact if and only if the associated functor $L(i^{\X}_{n} f) \colon \Pre(\C) \to \X$ of $\infty$-topoi is left exact. 
\end{corollary}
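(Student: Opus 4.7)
The plan is to deduce this as a formal consequence of the chain of equivalences already established in the paper, treating covering-flatness as the intermediate notion which interpolates between left exactness of $L(f)_n$ on $\Pre(\C)_n$ and left exactness of $L(i_n^{\X} f)$ on $\Pre(\C)$.

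The ``if'' direction is exactly Proposition \ref{prop_comparison} (no hypercompleteness needed). For the ``only if'' direction, suppose that $L(f)_n \colon \Pre(\C)_n \to \X_n$ is left exact. Since $\C$ is a small $n$-category and $\X_n$ is an $n$-topos, Theorem \ref{Thm3} implies that $f \colon \C \to \X_n$ is covering-flat. By Claim \ref{Claim2} in the proof of Theorem \ref{Thm3}, this is equivalent to $i_n^{\X} f \colon \C \to \X$ being covering-flat. Now we invoke the hypothesis that $\X$ is hypercomplete: by Theorem \ref{Thm2}, a functor from $\C$ to a hypercomplete $\infty$-topos is flat if and only if it is covering-flat. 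Therefore $i_n^{\X} f$ is flat, i.e., $L(i_n^{\X} f) \colon \Pre(\C) \to \X$ is left exact, as desired.

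There is essentially no obstacle here beyond verifying that the hypotheses of the results being cited are met. The only nontrivial point is that Theorem \ref{Thm2} genuinely requires hypercompleteness of the target $\infty$-topos (see Remark \ref{hypercompleteness}), which explains why the converse of Proposition \ref{prop_comparison} can only be established under the assumption that $\X$ is hypercomplete rather than for an arbitrary $\infty$-topos. Note also that Claim \ref{Claim2} did not use any hypercompleteness assumption on $\X$, so it applies directly in this setting.
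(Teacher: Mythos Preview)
Your proof is correct. The paper's own argument is slightly different in presentation: it invokes Corollary~\ref{cor:flat_functors3} (the equivalence $\Fun^{L,\lex}(\Pre(\C),\X)\simeq\Fun^{L,\lex}(\Pre(\C)_n,\X_n)$) and then identifies the preimage of $L(f)_n$ under this equivalence with $L(i_n^{\X}f)$. Your route bypasses Corollary~\ref{cor:flat_functors3} and argues directly at the level of the single functor $f$ via the chain Theorem~\ref{Thm3} $\Rightarrow$ Claim~\ref{Claim2} $\Rightarrow$ Theorem~\ref{Thm2}. Since Corollary~\ref{cor:flat_functors3} is itself proved using exactly these three ingredients, the two arguments are logically equivalent; yours is just the unpacked version and is arguably more transparent for this particular statement, while the paper's version emphasizes that the result is a formal consequence of the categorical equivalence already established.
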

\begin{proof} The ``if'' part of the claim was shown in Proposition \ref{prop_comparison}. For the converse, we recall from Corollary \ref{cor:flat_functors3} (for $m=\infty$) that there is an equivalence between $\infty$-categories of left exact left adjoint functors:
$$\Fun^{L, \lex}(\Pre(\C), \X) \simeq \Fun^{L, \lex}(\Pre(\C)_n, \X_n)$$
$$ (F \colon \Pre(\C) \to \X) \mapsto (F_{| \Pre(\C)_n} \colon \Pre(\C)_n \to \X_n).$$
So, if $L(f)_n$ is left exact, then there is an essentially unique left exact left adjoint functor $F \colon \Pre(\C) \to \X$ such that 
$F_{| \Pre(\C)_n} \simeq L(f)_n$. But then $F \simeq L(i^{\X}_{n} f)$, since both are colimit-preserving extensions of $\C \xrightarrow{f} \X_n \xrightarrow{i^{\X}_n} \X$.
\end{proof}

\end{document}